\pdfoutput=1
\documentclass{article}

\usepackage{PRIMEarxiv}

\usepackage[utf8]{inputenc} 
\usepackage[T1]{fontenc}    
\usepackage{hyperref}       
\usepackage{url}            
\usepackage{booktabs}       
\usepackage{amsfonts}       
\usepackage{nicefrac}       
\usepackage{microtype}      
\usepackage{lipsum}
\usepackage{fancyhdr}       
\usepackage{graphicx}       
\graphicspath{{media/}}     
 
\usepackage{graphicx}
\usepackage{tikz-cd}
\usepackage[utf8]{inputenc}  
\usepackage{amssymb, amsmath}
\usepackage{amsthm}
\usepackage{enumitem}
\usepackage{hyperref}
\hypersetup{
    colorlinks=true,
    linkcolor=blue,
    filecolor=magenta,      
    urlcolor=cyan,
    citecolor=magenta,
    pdfpagemode=FullScreen,
    }


\theoremstyle{remark}
\newtheorem{remark}{Remark}[section]

\theoremstyle{definition}
\newtheorem{theorem}{Theorem}[section]
\newtheorem{lemma}{Lemma}[section]
\newtheorem{corollary}{Corollary}[section]
\newtheorem{proposition}{Proposition}[section]
\newtheorem{definition}{Definition}[section]
\newtheorem{example}{Example}[section]


\newcommand{\cinf}{$\mathcal{C}^{\infty}$}
\newcommand{\contract}{\,\lrcorner\,}

\setlength{\textheight}{20.0cm} 
\setlength{\textwidth}{14cm} 

\pagestyle{fancy}
\thispagestyle{empty}
\rhead{ \textit{ }} 

\fancyhead[LO]{\cinf-structures in the integration of involutive distributions}

\title{ \cinf-structures in the integration of involutive distributions
\thanks{\textit{\underline{Citation}}: 
\textbf{Authors. Title. Pages.... DOI:000000/11111.}} 
}

\author{
  *A.J. Pan-Collantes, C. Muriel, A. Ruiz, J.L. Romero \\
  Departamento de Matem\'aticas\\ 
  Universidad de C\'{a}diz - UCA \\
  Puerto Real\\
  \texttt{\{antonio.pan@uca.es, concepcion.muriel@uca.es, adrian.ruiz@uca.es, juanluis.romero@uca.es\}} \\
}

\begin{document}

\maketitle

\begin{abstract}
For a system of ordinary differential equations (ODEs) or,  more generally, an involutive distribution  of vector fields, the problem of its integration is considered. Among the many approaches to this problem, solvable  structures provide a systematic procedure of integration via Pfaffian equations that are integrable by quadratures. In this paper structures more general than solvable structures (named \cinf-structures) are considered. The symmetry condition in the concept of solvable structure is weakened for  \cinf-structures by requiring their vector fields be just \cinf-symmetries. For \cinf-structures  there is also an integration procedure, but the corresponding Pfaffian equations, although completely integrable, are not necessarily integrable by quadratures.    
The  well-known result on the relationship between integrating factors and Lie point symmetries for first-order ODEs is generalized for \cinf-structures and  involutive distributions of arbitrary corank by introducing symmetrizing factors.  The role of these symmetrizing factors on the integrability by quadratures of the Pfaffian equations associated with the \cinf-structure is also established.  Some examples that show how these objects and results can be applied in practice are also presented. 

\end{abstract}

\keywords{symmetry of a distribution \and solvable structure \and  \cinf-symmetry of a distribution \and \cinf-structure \and integrating factor \and differential equations}


\section{Introduction}


Let us consider a first-order differential equation
 \begin{equation}\label{orden1}
 \Delta\equiv   M(x,u)+N(x,u)u_1=0,
\end{equation} where $u_1$ denotes the derivative of the dependent variable $u$ with respect to the independent variable $x.$ Equation \eqref{orden1} can also be written as the total differential equation
\begin{equation}\label{orden1w}
    M(x,u)dx+N(x,u)du=0.
\end{equation}
By setting $\omega= M(x,u)dx+N(x,u)du,$ it is well-known that there always exists a function $\mu=\mu(x,u),$  called an {\it integrating factor}, such that $\mu \omega$ becomes an exact 1-differential form, i.e., there exists a function $F=F(x,u),$ called a {\it primitive}, such that $\mu\omega=dF.$ A primitive can be calculated by a {\it quadrature} as follows \cite{ibragimovlibro}:
$$F(x,u)=\int_{x_0}^x (\mu M)(t,u)dt+\int_{u_0}^u (\mu N)(x_0,t)dt.$$
Then the general solution to \eqref{orden1} can be expressed in implicit form as $F(x,u)=C,$ where $C$ is an arbitrary constant. 
Therefore, any primitive $F$ defines implicitly the {\it integral manifold} (integral curve) of the rank 1 distribution spanned by the vector field $Z=N(x,u)\partial_x-M(x,u)\partial_u$ associated to equation \eqref{orden1}.

The search of integrating factors is normally no easier than the integration of the original equation \eqref{orden1}, although integrating factors of certain prefixed forms can be found in some particular cases. It is also well known that there is a connection between integrating factors and symmetries, established by the Norwegian mathematician S. Lie: if $X=\xi(x,u)\partial_x+\eta(x,u)\partial_u$ is a (Lie point) symmetry of \eqref{orden1}, then the function \begin{equation}\label{muLie}
    \mu=(\xi M+\eta N)^{-1}
\end{equation}
becomes an integrating factor of \eqref{orden1}. This function $\mu=\mu(x,u)$ can be expressed in terms of the  contraction (or interior product)  of the 1-differential form $\omega$ 
with the symmetry $X$ as follows
\begin{equation}\label{mu0}
\mu=\left(X\lrcorner\,\omega\right)^{-1}.\end{equation}


Sherring and Prince proved in  \cite[Theorem 2.1]{sherring1992geometric} that this result can be generalized for a Pfaffian equation defined by a 1-form $\omega$  on some open set $U$ of a manifold of arbitrary dimension $n$ which is Frobenius integrable, i.e., such that $d\omega\wedge \omega=0$. In this general setting, the condition for a vector field $X$ to be  a symmetry of $\omega$ in terms of the Lie derivative $\mathcal{L},$ is $\mathcal{L}_X\omega=\rho\omega,$ for some smooth function (or 0-form) $\rho$ on $U.$ This condition can be equivalently expressed in the form $\mathcal{L}_X \mathcal{D}\subset\mathcal{D},$ where $\mathcal{D}$ denotes the 
distribution  of rank $n-1$ spanned by the vector fields which annihilate $\omega.$   This concept of symmetry can be straightforwardly  extended for systems of $n-r$ Pfaffian equations, or equivalently, for involutive distributions $\mathcal{D}$  of arbitrary rank $r<n$ \cite{lychagin1991,Dubrov1998}. 

In particular, the integrability problem of an $n$th-order system of $m$ ordinary differential equations fits in this geometric setting by considering the  distribution of rank $r=1$  spanned by the vector field $Z$ associated to the differential system. The  extended notion of symmetry, determined in this case by the condition 
\begin{equation}\label{symgeo}
    \mathcal{L}_X Z\in \mathcal{S}(\{Z\}), 
\end{equation}
where $\mathcal{S}(\{Z\}$ denotes the submodule generated by  $Z,$ includes not only standard prolongations of Lie point symmetries but also of generalized Lie symmetries \cite{olver86,ibragimovlibro,stephani,blumanlibro,ovsiannikovlibro,blumankumeilibro} .

Symmetries of distributions were used by Basarab-Howarth to introduce in \cite{basarab} the notion of {\it solvable structure}, which plays a fundamental role in the integrability by quadratures of involutive distributions (see, for instance, \cite[Proposition 3]{basarab} and  \cite[Proposition 4.7]{sherring1992geometric}).  
The solvable structure method of integration recovers, as a particular case,  the classical result that asserts that an $n$th-order system of $m$ ordinary differential equations with an $mn$-dimensional {\it solvable algebra} of point symmetries can be completely integrated by quadratures. However, the procedure based on solvable structures is much more powerful because involves symmetries more general than Lie point symmetries, and computationally is simpler because it avoids canonical coordinates and the associated quotient manifolds (see Section VI in \cite{sherring1992geometric}). Moreover, the elements of a solvable structure, except its first element, are not required to be symmetries of the system of differential equations \cite[Proposition 56]{sherring1992geometric}. 

In \cite{muriel01ima1} it was proved that the traditional point symmetry method of reduction for ODEs can be extended by considering $\lambda$-prolongations \cite[Definition 2.1]{muriel01ima1}, which are more general than standard prolongations of vector fields.  This gave rise to the notion of   $\mathcal{C}^{\infty}$-symmetry (or $\lambda$-symmetry) for ODEs \cite[Definition 2.2]{muriel01ima1}, as  a generalization of Lie point symmetries.   A $\mathcal{C}^{\infty}$-symmetry 
$X$ does not verify the symmetry condition \eqref{symgeo}, but a less restrictive one, namely, 
\begin{equation}
    \mathcal{L}_X Z
    \in \mathcal{S}(\{Z,X\}). 
\end{equation} In contrast to the case of symmetries, a $\mathcal{C}^{\infty}$-symmetry $X$ can be multiplied by any smooth function $f$ and $fX$  is still a $\mathcal{C}^{\infty}$-symmetry \cite[Lemma 5.1]{muriel01ima1}. 

Thanks to the works by authors such as G. Gaeta, P. Morando, G. Cicogna, D. Catalano-Ferraioli, E. Pucci, G. Saccomandi, S. Walcher, among other authors, the original notion of  $\mathcal{C}^{\infty}$-symmetry for ODEs evolved with the years in several directions (see \cite{gaetatwisted,muriel_evolution} and references therein), giving rise to other types of symmetries defined through a deformation of the standard prolongation operation 
\cite{pucci, gaetamorando,gaeta2,murielolver,cicognagaeta_noether,morando_mu_symmetries}.  
Further studies have also shown  the convenience of considering not only the deformation of a single vector field but a of a set of vector fields 
\cite{cicogna2012generalization, cicogna2013dynamical,gaetacollective},   and distributions of vector fields and Frobenius integrability in this type of problems \cite{gaeta_frobenius,PaolaPfaffian,paola.frobenius}. 

The first goal of this paper is to generalize the mentioned result by Sherring and Prince on integrating factors for a single Pfaffian equation $\omega=0$ on a $n$th-dimensional manifold. For this aim, we consider the extended  concept of  $\mathcal{C}^{\infty}$-symmetry for involutive distributions $\mathcal{D}$  of arbitrary rank $r<n$ that has been recently introduced in \cite[Definition 3.2]{pancinf-structures}. When $X$ is  a $\mathcal{C}^{\infty}$-symmetry, the function $(X\,\lrcorner\,\omega)^{-1}$ which appears in \eqref{mu0} is no longer an integrating factor as in the case of symmetries. However, this function $h:=(X\,\lrcorner\,\omega)^{-1},$ which is always well defined, can be used as a ${\it normalizing}$ factor, meaning that the $\mathcal{C}^{\infty}$-symmetry $hX$ satisfies $hX\,\lrcorner\, \omega =1.$ Our first result  proves that integrating factors can be characterized as the reciprocal of symmetrizing factors, i.e.,  functional factors which converts a normalized $\mathcal{C}^{\infty}$-symmetry into a symmetry (see Lemma \ref{lema_inverserelation}). As a consequence, for involutive distributions of corank 1,  we derive
in Theorem \ref{factorgeneral} a  correspondence between integrating factors and symmetrizing factors for arbitrary (not necessarily normalized) $\mathcal{C}^{\infty}$-symmetries.

The following objective addressed in this paper is to  extend these results  to a completely integrable system of $n-r$ Pfaffian equations, which corresponds to a rank $r$ involutive distribution $\mathcal{Z}$ of vector fields. In Theorem \ref{theor_symfactor} we first demonstrate the existence of symmetrizing factors for any $\mathcal{C}^{\infty}$-symmetry of the distribution $\mathcal{Z}.$ Several properties of symmetrizing factors are studied in Corollary \ref{corosymfactor}. Secondly, we investigate the role of symmetrizing factors   in the integrability by quadratures of a distribution $\mathcal{Z}$ of arbitrary rank.  At this point, a single $\mathcal{C}^{\infty}$-symmetry of $\mathcal{Z}$ is not enough for the complete integration; it is necessary to consider an ordered sequence  of $n-r$ vector fields $\{X_1,\ldots,X_{n-r}\}$ that form a specific structure. These structures, introduced in \cite[Definition 3.3]{pancinf-structures} with the name of {\it $\mathcal{C}^{\infty}$-structures}, extend the notion of $\mathcal{C}^{\infty}$-symmetry as solvable structures generalized the notion of symmetry.  A brief summary of the main results on $\mathcal{C}^{\infty}$-symmetries and $\mathcal{C}^{\infty}$-structures of distributions, as well as the main properties that are used throughout  the paper, are presented in Section \ref{preliminaries}.

In Section \ref{secSymFactors} we consider a dual relationship between a \cinf-structure of vector fields for $\mathcal{Z}$ and an associated \cinf-structure of $n-r$ one-forms. We also present a procedure to the stepwise construction of the integral manifolds of $\mathcal{Z}$ by means of the 
successive integration of $n-r$ completely integrable Pfaffian equations, each one defined in a  space of one less dimension \cite[Theorem 3.5]{pancinf-structures}. 
 In this context, a solvable structure is the optimal particular case of $\mathcal{C}^{\infty}$-structure, in which case any of these $n-r$ Pfaffian equations can be integrated by quadrature. However,  solvable structures are more difficult to find in practice than $\mathcal{C}^{\infty}$-structures, because the elements of a solvable structure must satisfy stronger conditions.

 In Section  \ref{sec_4} we first prove that for each element $X_i$ of a $\mathcal{C}^{\infty}$-structure, there exists a symmetrizing factor of $X_i$  with respect to the  distribution $\mathcal{S}(\mathcal{X}_{i-1})$ spanned by the vector fields in $\mathcal{Z}$ and the first $i-1$ vector fields of the \cinf- structure. In  Theorem \ref{relacioninversa2} we establish the role of such symmetrizing factors in the integration procedure associated to the  $\mathcal{C}^{\infty}$-structure.  As a consequence, we derive an indirect method, based on \cinf-structures, to construct solvable structures, which in practice are difficult to find. However, it must be remarked that these solvable structures appear {\it ad hoc}, once the distribution has been completely integrated by the  $\mathcal{C}^{\infty}$-structure method.  
 
 Finally, in Section \ref{sec5} we  present some examples to show how the concepts and results obtained in this paper can be used in practice. In particular, in Example \ref{ex_airy}, the \cinf-structure procedure is used to  integrate completely a third-order ordinary differential equation that cannot be solved by the Lie classical method. 

\section{\texorpdfstring{\cinf}--symmetries 
of distributions and symmetrizing factors}\label{preliminaries} 


Throughout the paper we will work on a star shaped open subset $U$ of $\mathbb{R}^n.$ By $\mathfrak{X}(U)$ and ${\Omega}^k(U)$ we will denote the modules over $\mathcal{C}^{\infty}(U)$ of all smooth vector fields and differential $k$-forms, respectively, while $\Omega^*(U)$ will denote the exterior algebra of all the differential forms on $U$ \cite{Morita,warner}. Given a set $B\subseteq \mathfrak{X}(U)$ (respectively, $B\subseteq \Omega^1(U)$) we will denote by $\mathcal{S}(B)$ the submodule of $\mathfrak{X}(U)$ (respectively, $\Omega^1(U)$) generated by $B$.  
We say that a set of vector fields (or 1-forms) is independent on $U$  if they are pointwise linearly independent at every point of $U.$ 


We will call a rank $r$ \emph{distribution} on $U$ to a submodule of $\mathfrak{X}(U)$ of the form $\mathcal{Z}=\mathcal{S}(\{Z_1,\ldots,Z_r\})$, where $Z_1,\ldots,Z_r$ are $r$ independent vector fields on $U$. In a similar way, a rank $s$ \textit{Pfaffian system} will be a submodule of $\Omega^1(U)$ generated by $s$ independent 1-forms. 
It is well known \cite{warner,bryant2013exterior}  (see \cite{pancinf-structures} and the references therein) that for a distribution $\mathcal{Z}$ of rank $r$ there exists a rank $n-r$ Pfaffian system, which will be denoted by $\mathcal{Z}^{\circ},$ whose elements $\omega\in \mathcal{Z}^{\circ}$ are characterized by the conditions
\begin{equation}
    Z_i\lrcorner\,\omega =0 \quad \mbox{for} \quad 1\leq i\leq r.
\end{equation} 
Conversely, given a rank $n-r$ Pfaffian system ${\Lambda}=\mathcal{S}(\{\omega_1,\ldots,\omega_{n-r}\})$ there exists a rank $r$ distribution, denoted by $\Lambda^{\perp},$ whose elements $Z\in \Lambda^{\perp}$ are characterized by the conditions
\begin{equation}
    Z\lrcorner\,\omega_i =0 \quad \mbox{for} \quad 1\leq i\leq n-r.
\end{equation}  We refer to the distribution $\Lambda^{\perp}$ as the kernel or  the characteristic space  of $\Lambda.$

Let us recall that a distribution $\mathcal{Z}$ is said to be \textit{involutive} if it is closed under the Lie bracket. 
The distribution $\mathcal{Z}$ is involutive if and only if $\mathcal{Z}^{\circ}$ is a {\it completely integrable} Pfaffian system, i.e.,  the algebraic ideal generated by $\mathcal{Z}^{\circ}$ is closed under exterior differentiation (see, for instance, \cite[Proposition 2.30]{warner}). The  (local) existence of maximal integral manifolds for involutive distributions (or for {completely integrable} Pfaffian systems) is guaranteed by the well-known Frobenius Theorem \cite{warner}.

Two important notions concerning the search for integral manifolds of involutive distributions are the concepts of symmetry of a distribution and solvable structure \cite{basarab, hartl1994solvable,lychagin1991, sherring1992geometric,ChrisAthorne1998,Barco2001,Barco2002,BarcoPDE,barco2001similarity}. In the recent paper \cite{pancinf-structures}, generalizations of both concepts were presented, enlarging the class of vector fields that can be used to integrate involutive distributions. 

We first recall the concept of  $\mathcal{C}^{\infty}$-symmetry for distributions, which generalizes the original notion of $\mathcal{C}^{\infty}$-symmetry for a scalar ordinary differential equation \cite{muriel01ima1}:

\begin{definition}[Definition 3.2 in \cite{pancinf-structures}] \label{Csymmdistribution}
Let $\mathcal{Z}=\mathcal{S}(\{ Z_1,\ldots, Z_r \})$ be an involutive distribution  on an open set $U\subseteq \mathbb{R}^n,$ $r<n.$ A vector field $X$ on $U$ such that $\{Z_1,\ldots,Z_{r},X\}$ is linearly independent will be called a {\bf $\mathcal{C}^{\infty}$-symmetry of $\mathcal{Z}$} if the distribution $\mathcal{S}(\{Z_1,\ldots, Z_r, X \})$ is involutive, i.e., for $ i,k=1,\ldots,r$ there exist smooth functions $\lambda_i, c_{ik}$ on $U$ such that
            \begin{equation}\label{deficinfsym}
            [X,Z_i]=\lambda_i X+\sum_{k=1}^r c_{ik} Z_k. 
            \end{equation}
\end{definition}

\begin{remark}
\begin{enumerate}
  \item The standard notion of symmetry of a distribution \cite[Definition 1]{hartl1994solvable} corresponds to the particular case of a \cinf-symmetry where $\lambda_i=0$ for $i=1,\ldots,r.$ 
  \item The original concept of  \cinf-symmetry of a $n$th-oder ODE \cite{muriel01ima1} corresponds to a  \cinf-symmetry of the distribution of rank 1 spanned by the vector field associated to the equation.  
  \item When $r=n-1,$ the independence of the set $\{Z_1,\ldots,Z_{n-1},X\}$ implies that $X$ is a $ \mathcal{C}^{\infty}$-symmetry of the involutive distribution $\mathcal{Z}=\mathcal{S}(\{Z_1,\ldots,Z_{n-1}\}),$ because \eqref{deficinfsym} holds. 
\end{enumerate}
\end{remark}

Let us recall that for ODEs the product of a given $\mathcal{C}^{\infty}$-symmetry by a non-vanishing smooth function is also a $\mathcal{C}^{\infty}$-symmetry (see  \cite[Lemma 5.1]{muriel01ima1}). In the following Proposition, which will be used later,  we show that a similar result holds for $\mathcal{C}^{\infty}$-symmetries of distributions: 

\begin{proposition}
\label{prop_multiplicacion}
Let $\mathcal{Z}=\mathcal{S}(\{ Z_1,\ldots, Z_r \})$ be an involutive distribution of vector fields on an open set $U\subseteq \mathbb{R}^n,$ $r<n,$ and consider a non-vanishing smooth function $h$  on $U.$ Then:
\begin{enumerate}
    \item   If $X$ is a $\mathcal{C}^{\infty}$-symmetry of  $\mathcal{Z}$ then  $hX$ is also a $\mathcal{C}^{\infty}$-symmetry of $\mathcal{Z}.$ 
    \item In particular,  if $Y$ is a symmetry of $\mathcal{Z}$ and $h\in \mathcal{C}^{\infty}(U)$ then $hY$ is  a \cinf-symmetry of $\mathcal{Z}.$ 
\end{enumerate}
\end{proposition}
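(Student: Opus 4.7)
The plan is to verify the two defining conditions of a $\mathcal{C}^\infty$-symmetry directly: the independence of $\{Z_1,\ldots,Z_r,hX\}$ and the bracket condition \eqref{deficinfsym}. The independence is immediate because $h$ is nowhere zero, so any pointwise linear relation among $\{Z_1,\ldots,Z_r,hX\}$ can be divided by $h$ at the factor of $hX$ to give a relation among $\{Z_1,\ldots,Z_r,X\}$, contradicting the assumption that $X$ is a $\mathcal{C}^\infty$-symmetry.

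For the bracket condition, the main tool is the standard Leibniz-type identity
\begin{equation*}
[hX,Z_i] \;=\; h[X,Z_i] - Z_i(h)\,X.
\end{equation*}
Substituting the $\mathcal{C}^\infty$-symmetry relation $[X,Z_i]=\lambda_i X+\sum_k c_{ik}Z_k$ yields
\begin{equation*}
[hX,Z_i] \;=\; \bigl(h\lambda_i - Z_i(h)\bigr)X + \sum_{k=1}^{r} h\,c_{ik}\,Z_k.
\end{equation*}
Since $h$ never vanishes, one can rewrite the $X$-term as $(\lambda_i - Z_i(h)/h)(hX)$, producing an expression of the required form
\begin{equation*}
[hX,Z_i] \;=\; \widetilde{\lambda}_i\,(hX) + \sum_{k=1}^{r} \widetilde{c}_{ik}\,Z_k,
\end{equation*}
with $\widetilde{\lambda}_i := \lambda_i - Z_i(h)/h$ and $\widetilde{c}_{ik}:= h\,c_{ik}$, all of which are smooth on $U$.

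Part 2 is an immediate consequence of Part 1: a symmetry $Y$ of $\mathcal{Z}$ is precisely a $\mathcal{C}^\infty$-symmetry with all $\lambda_i=0$, so Part 1 applies to give that $hY$ is a $\mathcal{C}^\infty$-symmetry (with deformation coefficients $\widetilde{\lambda}_i=-Z_i(h)/h$, which are generally nonzero, explaining why $hY$ need not be a genuine symmetry). I do not anticipate any real obstacle here; the only point requiring care is to confirm smoothness of $\widetilde{\lambda}_i$, which is guaranteed by the non-vanishing of $h$, and to keep in mind that multiplying by $h$ generically destroys the strict symmetry condition but preserves the weaker $\mathcal{C}^\infty$-symmetry condition, which is exactly the contrast being emphasized after Definition \ref{Csymmdistribution}.
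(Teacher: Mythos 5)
Your proposal is correct and follows essentially the same route as the paper's proof: the same Leibniz identity $[hX,Z_i]=h[X,Z_i]-Z_i(h)X$, the same rewriting of the $X$-coefficient as $(\lambda_i-Z_i(h)/h)(hX)$, and the same observation that the independence of the extended frame survives multiplication by a nowhere-vanishing $h$. Part 2 is likewise deduced as an immediate special case, exactly as in the paper.
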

\begin{proof}
Since $X$ is a $\mathcal{C}^{\infty}$-symmetry of  $\mathcal{Z}$ and $h$ is a non-vanishing function on $U$ then  the set $\{Z_1,\ldots,Z_r,hX\}$ is independent on $U.$ By using \eqref{deficinfsym} and the properties of the Lie bracket, we can write, for $i=1\ldots,r,$
    \begin{equation}\label{eq_obs_cinfsym}
        \begin{array}{lll}
        [hX,Z_i]&=&-Z_i(h)X+h[X,Z_i]=\left(h\lambda_i-Z_i(h) \right)X+\displaystyle\sum_{k=1}^r hc_{ik} Z_k\\ &=&\left(\lambda_i-\dfrac{Z_i(h)}{h} \right)hX+\displaystyle\sum_{k=1}^r hc_{ik} Z_k. 
        \end{array} 
    \end{equation} 
This proves that $hX$ is also a  $\mathcal{C}^{\infty}$-symmetry of $\mathcal{Z}.$ 

The second part is a direct consequence of the previous one.
\end{proof}

The following result follows immediately  from \eqref{eq_obs_cinfsym}:

\begin{corollary}\label{coro_symfactor}
    Let $X$ be a \cinf-symmetry of the involutive distribution $\mathcal{Z}=\mathcal{S}(\{Z_1,\ldots, Z_r\})$ and let $\lambda_1,\ldots\lambda_r$ be the corresponding functions in \eqref{deficinfsym}. A non-vanishing smooth function $f$  on $U$ is such that $fX$ is a symmetry of $\mathcal{Z}$ if and only if \begin{equation}\label{condicionsymfactor}
  {Z_i(f)}=\lambda_if,\quad \mbox{for} \quad i=1,\ldots,r.
\end{equation} 
  In this case the function $f$ will be called a {\bf symmetrizing factor of $X$ with respect to $\mathcal{Z}$}.  
\end{corollary}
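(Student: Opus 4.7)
The plan is to read off the result directly from the computation already carried out in Proposition~\ref{prop_multiplicacion}. Applying equation~\eqref{eq_obs_cinfsym} to $h=f$ gives, for each $i=1,\ldots,r$,
\begin{equation*}
[fX,Z_i] = \left(\lambda_i - \frac{Z_i(f)}{f}\right) fX + \sum_{k=1}^{r} f c_{ik} Z_k,
\end{equation*}
and this is exactly the decomposition on which the argument will hinge.

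Next I would recall that $fX$ being a symmetry of $\mathcal{Z}$ means $[fX,Z_i]\in \mathcal{Z}=\mathcal{S}(\{Z_1,\ldots,Z_r\})$ for every $i$; that is, the bracket must lie in the submodule generated by $Z_1,\ldots,Z_r$ with no $fX$-component. The key observation is that because $f$ is non-vanishing and $\{Z_1,\ldots,Z_r,X\}$ is independent on $U$, the set $\{Z_1,\ldots,Z_r,fX\}$ is also independent, so the decomposition displayed above is unique at every point. Therefore the membership condition $[fX,Z_i]\in \mathcal{Z}$ is equivalent to the vanishing of the coefficient of $fX$.

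Finally I would conclude: the coefficient $\lambda_i - Z_i(f)/f$ vanishes precisely when $Z_i(f)=\lambda_i f$, which is condition~\eqref{condicionsymfactor}. Doing this for all $i=1,\ldots,r$ yields the claimed equivalence. There is no real obstacle here since everything reduces to reading off the coefficient in an already established identity; the only point worth being explicit about is the uniqueness of the decomposition, which justifies passing from ``the bracket lies in $\mathcal{Z}$'' to ``the $fX$-coefficient vanishes''.
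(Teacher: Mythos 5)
Your proof is correct and follows exactly the route the paper intends: the paper presents this corollary as following ``immediately from \eqref{eq_obs_cinfsym}'', i.e.\ from the same decomposition of $[fX,Z_i]$ you use, with the $fX$-coefficient forced to vanish. Your explicit remark on the pointwise linear independence of $\{Z_1,\ldots,Z_r,fX\}$, which makes the coefficient comparison legitimate, is a welcome clarification of a step the paper leaves implicit.
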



The main goal in this section is to investigate the existence of these symmetrizing factors and their role  in the integrability of the distribution. In order to present the results in a more accessible way,  we   begin studying  the case of corank 1 distributions. 

\subsection{Symmetrizing factors and integrating factors for corank 1 distributions}\label{secSymFactors1}

Throughout this section $\omega$ will denote  a Frobenius integrable 1-form which is non-vanishing on some open subset $U$ of $\mathbb{R}^n$, and  $\mathcal{Z}$  will be its characteristic space, i.e., the corank $1$ involutive distribution $\mathcal{S}(\{\omega\})^{\perp}.$
 
The well-known relation \eqref{mu0} between Lie point symmetries and integrating factors for a first-order ODE, i.e. for the Pfaffian equation  \eqref{orden1w} in two variables, was   generalized by Sherring and Prince for a symmetry of the corank 1 distribution $\mathcal{Z}= \mathcal{S}(\{\omega\})^{\perp},$ i.e., for a Pfaffian equation in $n$ variables: 


\begin{theorem}\label{sherring21}\cite[Theorem 2.1]{sherring1992geometric}
 A non-vanishing  smooth function $\mu$ is an integrating factor for $\omega$ if and only if $\mu$ can be written as $\mu=1/(V\,\lrcorner\, \omega),$ for some  symmetry $V$ of  $\mathcal{Z}=\mathcal{S}(\{\omega\})^{\perp}$ with $V \,\lrcorner\, \omega \neq 0$ on $U.$

\end{theorem}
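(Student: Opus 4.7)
The plan is to exploit two tools: Cartan's magic formula $\mathcal{L}_V\omega = V\lrcorner d\omega + d(V\lrcorner\omega)$, and the Frobenius integrability condition $d\omega\wedge\omega=0$, which guarantees the existence of a 1-form $\beta$ with $d\omega = \beta\wedge\omega$. I will prove both implications separately.

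For the implication ``symmetry $\Rightarrow$ integrating factor'', suppose $V$ is a symmetry of $\mathcal{Z}$ with $V\lrcorner\omega\neq 0$ on $U$, so $\mathcal{L}_V\omega=\rho\omega$ for some smooth $\rho$. Set $\mu:=1/(V\lrcorner\omega)$, so that $V\lrcorner(\mu\omega)=1$. By Leibniz, $\mathcal{L}_V(\mu\omega)=(V(\mu)+\mu\rho)\omega$, while by Cartan's formula $\mathcal{L}_V(\mu\omega)=V\lrcorner d(\mu\omega)+d(V\lrcorner(\mu\omega))=V\lrcorner d(\mu\omega)$. So $V\lrcorner d(\mu\omega)$ is a multiple of $\omega$. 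On the other hand, since $d\omega\wedge\omega=0$ one has $d(\mu\omega)\wedge\omega = d\mu\wedge\omega\wedge\omega + \mu\,d\omega\wedge\omega = 0$, so $d(\mu\omega)=\gamma\wedge\omega$ for some 1-form $\gamma$. Contracting with $V$ yields $(V\lrcorner\gamma)\omega-(V\lrcorner\omega)\gamma = (V(\mu)+\mu\rho)\omega$, from which $\gamma$ is forced to be a multiple of $\omega$; then $d(\mu\omega)=\gamma\wedge\omega=0$, proving that $\mu$ is an integrating factor.

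For the converse, assume $\mu$ is a (non-vanishing) integrating factor, i.e.\ $d(\mu\omega)=0$ on $U$. Since $U$ is star-shaped, by the Poincaré lemma there is $F\in\mathcal{C}^\infty(U)$ with $\mu\omega=dF$, and $dF$ is nowhere zero. I would then choose any smooth vector field $V$ on $U$ with $V(F)=1$ (possible since $dF\neq 0$; one can take $V=\nabla F/\|\nabla F\|^2$ in any auxiliary Riemannian metric). By construction $V\lrcorner\omega = V(F)/\mu = 1/\mu$, so $\mu=1/(V\lrcorner\omega)$. It remains to verify that such a $V$ is a symmetry of $\mathcal{Z}$. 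From $\mu\omega=dF$ one gets $d\omega = -d(\log\mu)\wedge\omega$. Applying Cartan's formula and substituting this expression for $d\omega$, the contributions from $V\lrcorner d\omega$ and $d(V\lrcorner\omega)=d(1/\mu)=-(1/\mu)d\log\mu$ cancel the non-$\omega$ parts, leaving $\mathcal{L}_V\omega = -V(\log\mu)\,\omega$, so $V$ is indeed a symmetry of $\mathcal{Z}$.

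The only real subtlety is the algebraic manipulation in the forward direction where one must conclude $\gamma\in\mathcal{S}(\{\omega\})$; this is where Frobenius integrability is essential, and it is the step that would fail without it. The construction of $V$ in the converse is geometrically clean thanks to the Poincaré lemma on the star-shaped $U$, so no obstruction arises there.
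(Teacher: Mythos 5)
Your proof is correct. Note that the paper does not prove this statement at all: it is imported verbatim as Theorem 2.1 of Sherring and Prince, so there is no internal proof to compare against; your argument is a legitimate self-contained derivation. Both directions check out: in the forward direction the combination of Cartan's formula with the division $d(\mu\omega)=\gamma\wedge\omega$ (valid because $\omega$ is non-vanishing and $d(\mu\omega)\wedge\omega=0$) correctly forces $\gamma\in\mathcal{S}(\{\omega\})$ and hence $d(\mu\omega)=0$; in the converse, any $V$ with $V(F)=1$ gives $\mathcal{L}_V(\mu\omega)=d(V(F))=0$ and therefore $\mathcal{L}_V\omega=-(V(\mu)/\mu)\,\omega$, which is your computation in disguise. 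Two cosmetic points: since the paper defines an integrating factor by requiring $\mu\omega$ to be \emph{exact}, you should close the forward direction by invoking the Poincar\'e lemma on the star-shaped $U$ (you already do this in the converse); and you should write $d\mu/\mu$ rather than $d(\log\mu)$, since $\mu$ is only assumed non-vanishing, not positive.
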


Now we assume that $X$ is not a symmetry of $\mathcal{Z},$ but a \cinf-symmetry. According to \cite[Remark 3.4 (a)]{pancinf-structures}, the function $X\,\lrcorner\, \omega$ does not vanish on $U$ and, therefore, the function $h=(X\,\lrcorner\, \omega)^{-1}$ is well defined on $U.$ 
Although by Theorem \ref{sherring21} $h$ is not an integrating factor of $\omega,$ it makes sense to investigate if there is  a connection between the integrating factors of $\omega$ and the symmetrizing factors of the \cinf-symmetry  $X.$ 

In this section we firstly prove the existence of symmetrizing factors for any \cinf-symmetry of a corank 1 distribution $\mathcal{Z}.$ 
The following proposition is just a reformulation of a result proved by Sherring and Prince in \cite{sherring1992geometric} (see pages 436 and 437) that we state in terms of the notion of \cinf-symmetry:

\begin{theorem}\label{existencia_corango1}
 If $X$ is a \cinf-symmetry of an involutive distribution  $\mathcal{Z}$ of corank 1 then there exists,  at least locally, a  symmetrizing factor of $X$ w.r.t. $\mathcal{Z}.$ 
\end{theorem}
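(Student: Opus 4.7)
My plan is to reduce the statement to an overdetermined first-order linear PDE system and then verify its integrability using the Jacobi identity. By Corollary \ref{coro_symfactor}, finding a non-vanishing symmetrizing factor $f$ amounts to finding a smooth $g$ with
\begin{equation*}
Z_i(g)=\lambda_i, \qquad i=1,\ldots,n-1,
\end{equation*}
after which $f=e^g$ is automatically non-vanishing; so the problem reduces to the local solvability of this linear system for $g$.

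Second, I would derive the necessary compatibility. Involutivity of $\mathcal{Z}$ produces smooth functions $c'_{ijk}$ with $[Z_i,Z_j]=\sum_k c'_{ijk}Z_k$, and applying $[Z_i,Z_j]$ to a hypothetical solution forces
\begin{equation*}
Z_i(\lambda_j)-Z_j(\lambda_i)=\sum_k c'_{ijk}\lambda_k.
\end{equation*}
I would verify this identity directly from the Jacobi relation $[X,[Z_i,Z_j]]+[Z_i,[Z_j,X]]+[Z_j,[X,Z_i]]=0$: expanding each bracket with the \cinf-symmetry relation \eqref{deficinfsym} and the involutivity bracket, and isolating the coefficient of $X$, one finds that the $\lambda_i\lambda_j$ cross-terms coming from the last two brackets cancel, leaving precisely the displayed compatibility condition.

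With compatibility in hand, the local existence of $g$ follows from a Frobenius-plus-Poincar\'e argument. By the Frobenius theorem applied to $\mathcal{Z}$, I would pick coordinates $(y^1,\ldots,y^n)$ on a neighborhood in which $\partial_{y^1},\ldots,\partial_{y^{n-1}}$ span $\mathcal{Z}$; expressing each $Z_i$ in this frame and inverting the resulting coefficient matrix converts the system into $\partial_{y^j}g=b_j$ for suitable smooth $b_j$ (with $j=1,\ldots,n-1$), whose mixed-partial compatibility is equivalent to the identity above. The Poincar\'e lemma on the star-shaped domain $U$ then produces $g$ (with $\partial_{y^n}g$ freely prescribable), and $f=e^g$ is the desired symmetrizing factor. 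The main obstacle will be the bookkeeping of the $X$-coefficient in the Jacobi expansion of step two; a dual alternative would reformulate everything via the generator $\omega$ of $\mathcal{Z}^{\circ}$, using the form of $\mathcal{L}_X\omega$ forced by \eqref{deficinfsym}, but the bracket-based route above is the most transparent.
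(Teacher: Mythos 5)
Your proof is correct, but it follows a genuinely different route from the paper's. The paper's argument is a two-line reduction to Theorem \ref{sherring21}: since $\omega$ is Frobenius integrable it admits an integrating factor $\mu$, hence by Sherring--Prince there is a symmetry $V$ of $\mathcal{Z}$ with $V\contract\omega\neq 0$; because the corank is $1$ and $X\contract\omega\neq 0$, one writes $V=fX+Z$ with $Z\in\mathcal{Z}$ and $f=(V\contract\omega)/(X\contract\omega)$ non-vanishing, so $fX=V-Z$ is a symmetry and $f$ is the desired factor. You instead attack the defining system \eqref{condicionsymfactor} head-on: the substitution $f=e^g$ turns it into the inhomogeneous linear system $Z_i(g)=\lambda_i$, whose integrability condition $Z_i(\lambda_j)-Z_j(\lambda_i)=\sum_k c'_{ijk}\lambda_k$ you correctly extract as the $X$-coefficient of the Jacobi identity (the $\lambda_i\lambda_j$ cross-terms do cancel as you claim, and the pointwise independence of $\{Z_1,\ldots,Z_{n-1},X\}$ is what lets you isolate that coefficient), and which you then solve in Frobenius coordinates via the Poincar\'e lemma. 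Each approach buys something: the paper's is shorter but intrinsically tied to corank $1$, since it needs the single generator $\omega$ of $\mathcal{Z}^{\circ}$ and the classical existence of integrating factors; your argument uses nothing specific to corank $1$ and therefore also proves the general Theorem \ref{theor_symfactor} --- and by a route different from the paper's proof of that result too, which instead exhibits the factor directly as $f=1/X(F_{i_0})$ for a suitable Frobenius first integral $F_{i_0}$. Two small points of care: the equivalence between the frame-dependent condition $\partial_{y^j}b_k=\partial_{y^k}b_j$ and the identity on the $\lambda_i$ rests on the tensoriality of the leafwise $2$-form $(Y,W)\mapsto Y(\alpha(W))-W(\alpha(Y))-\alpha([Y,W])$ with $\alpha(Z_i)=\lambda_i$, which deserves one explicit line; and the Poincar\'e lemma should be applied on a small cube in the $y$-coordinates rather than on the star-shaped set $U$ itself, which is harmless since the statement is only local.
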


\begin{proof}
   Since $\mathcal{Z}$ is an involutive distribution of corank 1, then  $\mathcal{Z}=\mathcal{S}(\{\omega\})^{\perp}$ where $\omega$ is a Frobenius integrable 1-form. Then there exists an integrating factor $\mu$ for $\omega.$ By Theorem \ref{sherring21}, there exists some symmetry $V$ of $\mathcal{Z},$ {associated to $\mu,$} such that  $V \,\lrcorner\, \omega \neq 0$ on $U.$ Since $\mathcal{Z}$ is of corank 1 and $X \,\lrcorner\, \omega \neq 0,$ then  the symmetry $V$ can be expressed in the form  $V=f X+Z,$  for some non-vanishing smooth function $f$ and some vector field $Z$  in $\mathcal{Z}.$ Therefore $fX=V-Z$ is a symmetry  of $\mathcal{Z},$ which proves that  $f$ is a symmetrizing factor of $X$ w.r.t.  $\mathcal{Z}.$
\end{proof}

Previous theorem guarantees the compatibility of system \eqref{condicionsymfactor} for distributions $\mathcal{Z}$ of corank 1. 

Our next results investigate if   there exists a reciprocal relation between symmetrizing factors of $X$ w.r.t. $\mathcal{Z}$ and integrating factors of  $\omega.$ We begin considering  the case  of a \cinf-symmetry $X$ such that $X \,\lrcorner\, \omega=1:$

\begin{lemma}\label{lema_inverserelation}
Let $X$ be a \cinf-symmetry of $\mathcal{Z}=\mathcal{S}(\{\omega\})^{\perp}$  such that $X\,\lrcorner\, \omega=1.$ A non-vanishing smooth function $f$ is a symmetrizing factor for $X$ w.r.t. $\mathcal{Z}$ if and only if $\mu=1/f$ is an integrating factor for $\omega$.
\end{lemma}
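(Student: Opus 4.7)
The plan is to leverage the Sherring--Prince result (Theorem \ref{sherring21}) in both directions, taking advantage of the normalization $X\lrcorner\omega = 1$, which produces a clean splitting of vector fields along $X$ and into $\mathcal{Z}$.

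For the forward direction, I would set $V := fX$ and observe that, by the very definition of symmetrizing factor recalled in Corollary \ref{coro_symfactor}, $V$ is a symmetry of $\mathcal{Z}$. Since $V\lrcorner\omega = f\cdot(X\lrcorner\omega) = f$ is non-vanishing, Theorem \ref{sherring21} applied to $V$ yields at once that $\mu = 1/(V\lrcorner\omega) = 1/f$ is an integrating factor for $\omega$. This half is essentially a one-line application.

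For the reverse direction, suppose $\mu = 1/f$ is an integrating factor. Theorem \ref{sherring21} supplies a symmetry $V$ of $\mathcal{Z}$ with $V\lrcorner\omega = 1/\mu = f$. Because $X\lrcorner\omega = 1\neq 0$, the vector field $X$ together with any local basis of $\mathcal{Z}$ forms a smooth frame of $\mathfrak{X}(U)$, so $V$ admits a unique decomposition $V = aX + Z$ with $Z\in\mathcal{Z}$; contracting this equality with $\omega$ forces $a = V\lrcorner\omega = f$, whence $V = fX + Z$. It then remains to observe that any $Z\in\mathcal{Z}$ is itself a symmetry of $\mathcal{Z}$: Frobenius integrability gives a 1-form $\alpha$ with $d\omega = \alpha\wedge\omega$, and Cartan's formula yields $\mathcal{L}_Z\omega = Z\lrcorner d\omega = \alpha(Z)\omega$. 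Consequently $fX = V - Z$ is the difference of two symmetries, hence itself a symmetry of $\mathcal{Z}$, and this is exactly the assertion that $f$ is a symmetrizing factor of $X$ with respect to $\mathcal{Z}$.

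I do not foresee a real obstacle; the condition $X\lrcorner\omega = 1$ is doing all the heavy lifting by canonically splitting $\mathfrak{X}(U) = \mathcal{S}(\{X\})\oplus\mathcal{Z}$. A fully computational alternative would unfold $\mathcal{L}_{fX}\omega$ by Cartan's formula, use $d\omega = \alpha\wedge\omega$ to get $\mathcal{L}_{fX}\omega = (df - f\alpha) + f\alpha(X)\omega$, and then verify by elementary manipulation that $\mathcal{L}_{fX}\omega = \rho\omega$ for some $\rho$ is equivalent to $d(\omega/f) = 0$; but the route through Theorem \ref{sherring21} is shorter and better aligned with the paper's strategy of reducing $\mathcal{C}^{\infty}$-symmetry statements to symmetry statements via an appropriate multiplier.
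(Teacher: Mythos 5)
Your proposal is correct and follows essentially the same route as the paper's proof: both directions reduce to Theorem \ref{sherring21}, and the converse uses the same decomposition $V=fX+Z$ with $Z\in\mathcal{Z}$ forced by contracting with $\omega$. The only difference is that you explicitly justify (via Cartan's formula) why subtracting $Z\in\mathcal{Z}$ from the symmetry $V$ preserves the symmetry property, a step the paper leaves implicit since it follows directly from the involutivity of $\mathcal{Z}$.
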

\begin{proof}
We first assume that $f$ is a symmetrizing factor for $X$ w.r.t. $\mathcal{Z}$. Since $fX$ is a symmetry for $\mathcal{Z}$, according to Theorem \ref{sherring21} we have that $1/(fX \,\lrcorner\, \omega)=1/f$
is an integrating factor for $\omega$.

Conversely, if a non-vanishing smooth function $\mu$ is an integrating factor for $\omega$ then Theorem \ref{sherring21} guarantees the existence of a symmetry $V$ of $\mathcal{Z}$ such that $\mu=1/(V\,\lrcorner\,\omega).$ We set $f=V\,\lrcorner\,\omega$. 
Since $\mathcal{Z}$ is of corank 1 and $X\,\lrcorner\,\omega=1\neq 0$ on $U,$ then there must exist a non-vanishing smooth function $\alpha$ and some vector field $Z$  in $\mathcal{Z}$ such that $V=\alpha X+Z.$ From
$$
f=V\,\lrcorner\,\omega=(\alpha X+Z)\,\lrcorner\,\omega=\alpha
$$
it follows that  $fX=V-Z$ and therefore $fX$ is a symmetry of $\mathcal{Z}.$ This proves that  $f=1/\mu$ is a symmetrizing factor for $X$ w.r.t. $\mathcal{Z}$.
\end{proof}


The  result in Lemma \ref{lema_inverserelation} can be easily extended for an arbitrary \cinf-symmetry $X,$ without the restriction $X \,\lrcorner\,\omega=1,$ as follows: 

\begin{theorem}\label{factorgeneral}
Let $\omega$ be a Frobenius integrable 1-form which is non-vanishing on some open subset $U$ of $\mathbb{R}^n$, and denote by $\mathcal{Z}$  the involutive distribution associated to $\mathcal{S}(\{\omega\})$.
Let $X$ be a $\mathcal{C}^{\infty}$-symmetry  w.r.t. $\mathcal{Z}.$ A non-vanishing smooth function $f$ is a symmetrizing factor for $X$ w.r.t. $\mathcal{Z}$ if and only if $\mu=1/ (f X\,\lrcorner\,\omega)$ is an integrating factor for $\omega$.
\end{theorem}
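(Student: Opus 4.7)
The plan is to reduce Theorem \ref{factorgeneral} to the normalized case already handled in Lemma \ref{lema_inverserelation}, by replacing $X$ with its normalization $\tilde{X}:=hX$ where $h:=(X\,\lrcorner\,\omega)^{-1}$. Since $X$ is a $\mathcal{C}^{\infty}$-symmetry of the corank 1 distribution $\mathcal{Z}$, the contraction $X\,\lrcorner\,\omega$ is non-vanishing on $U$ (as recalled before Theorem \ref{existencia_corango1} via \cite[Remark 3.4(a)]{pancinf-structures}), so $h$ is a well-defined non-vanishing smooth function. By Proposition \ref{prop_multiplicacion}, $\tilde{X}=hX$ is again a $\mathcal{C}^{\infty}$-symmetry of $\mathcal{Z}$, and by construction it satisfies $\tilde{X}\,\lrcorner\,\omega=1$, so Lemma \ref{lema_inverserelation} applies to $\tilde{X}$.

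The next step is to translate the symmetrizing-factor condition across this rescaling. First I would observe that, for any non-vanishing smooth function $f$, one has the identity
\begin{equation*}
fX \;=\; \frac{f}{h}\,\tilde{X},
\end{equation*}
which immediately yields the equivalence: $f$ is a symmetrizing factor for $X$ w.r.t. $\mathcal{Z}$ if and only if $g:=f/h$ is a symmetrizing factor for $\tilde{X}$ w.r.t. $\mathcal{Z}$, since both conditions say exactly that $fX$ is a symmetry of $\mathcal{Z}$.

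Then I would apply Lemma \ref{lema_inverserelation} to $\tilde{X}$: $g=f/h$ is a symmetrizing factor for $\tilde{X}$ if and only if $1/g=h/f$ is an integrating factor for $\omega$. Rewriting the candidate integrating factor gives
\begin{equation*}
\frac{h}{f} \;=\; \frac{1}{f\,(X\,\lrcorner\,\omega)} \;=\; \frac{1}{(fX)\,\lrcorner\,\omega},
\end{equation*}
which is precisely $\mu$ as defined in the statement. Chaining the two equivalences then establishes the theorem in both directions.

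No serious obstacle is expected: the argument is a clean reduction, and the only points that require attention are verifying that $h$ is well-defined and non-vanishing (so the rescaling $\tilde{X}=hX$ is legitimate and Proposition \ref{prop_multiplicacion} applies), and that the algebraic identity $1/(f\,X\,\lrcorner\,\omega) = h/f$ matches the form of $\mu$ stated in the theorem. No new computation with the Frobenius integrability condition is needed, since all of that work is already embedded in Theorem \ref{sherring21} and Lemma \ref{lema_inverserelation}.
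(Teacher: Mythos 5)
Your proposal is correct and follows essentially the same route as the paper's own proof: normalize $X$ to $hX$ with $h=(X\,\lrcorner\,\omega)^{-1}$, use Proposition \ref{prop_multiplicacion} to keep the $\mathcal{C}^{\infty}$-symmetry property, translate the symmetrizing-factor condition via $f\mapsto f/h$, and invoke Lemma \ref{lema_inverserelation}. Your explicit justification that both conditions amount to ``$fX$ is a symmetry of $\mathcal{Z}$'' is a slightly more careful version of the paper's ``it is clear that'' step, but the argument is the same.
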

 \begin{proof}
     By Remark 3.4 (a) in \cite{pancinf-structures}, the smooth function $h=( X\,\lrcorner\,\omega)^{-1}$ is  well defined on $U,$ and by Proposition \ref{prop_multiplicacion}, $hX$ is a \cinf-symmetry of $\mathcal{Z}.$ It is clear that $f$ is a symmetrizing factor for $X$ w.r.t. $\mathcal{Z}$ if and only $f/h$ is a symmetrizing factor for $hX$ w.r.t. $\mathcal{Z}.$ Since  the \cinf-symmetry $hX$ satisfies the hypothesis in Lemma \ref{lema_inverserelation}, it follows that $f/h$ is a symmetrizing factor for $hX$ w.r.t. $\mathcal{Z}$ if and only if $h/f=(f X\,\lrcorner\,\omega)^{-1}$ is an integrating factor of $\omega,$ which concludes the proof. 
  \end{proof}

\subsection{Symmetrizing factors for  distributions of arbitrary rank}\label{secSymFactors2}

The results presented in Section \ref{secSymFactors1} for distributions of corank 1 show that the existence of a symmetrizing factor for a \cinf-symmetry provides an integrating factor  for the Pfaffian equation $\omega=0,$ and hence ensures its integratibility by quadrature. Our next goal is to extend these results to study the integrability of  Pfaffian systems with more than one equation. 

With this aim, we first extend the result in Theorem \ref{existencia_corango1} for distributions of arbitrary corank: 

\begin{theorem}\label{theor_symfactor}
Let $\mathcal{Z}=\mathcal{S}(\{ Z_1,\ldots, Z_r\})$ be an involutive distribution on an open set $U\subseteq \mathbb{R}^n$, $r<n,$ and let $X$ be a $ \mathcal{C}^{\infty} $-symmetry for $ \mathcal{Z}$. There exists, at least locally, a \emph{symmetrizing factor} $f$ for $X$ w.r.t. $\mathcal{Z}$. 
\end{theorem}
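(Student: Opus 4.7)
The plan is to reformulate the existence of a symmetrizing factor as a Frobenius integrability question. By Corollary \ref{coro_symfactor}, it suffices to produce a non-vanishing smooth $f$ on $U$ satisfying $Z_i(f) = \lambda_i f$ for $i = 1, \ldots, r$, where the $\lambda_i$'s come from the \cinf-symmetry relation $[X, Z_i] = \lambda_i X + \sum_k c_{ik} Z_k$ of Definition \ref{Csymmdistribution}. I would lift this overdetermined first-order system to the product $U \times \mathbb{R}$ and apply the Frobenius theorem there.

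The algebraic crux is a compatibility identity. Writing $[Z_i, Z_j] = \sum_k a_{ij}^k Z_k$ (possible by involutivity of $\mathcal{Z}$), any smooth solution $f$ would force
\begin{equation*}
[Z_i, Z_j](f) = \bigl(Z_i(\lambda_j) - Z_j(\lambda_i)\bigr) f,
\end{equation*}
and comparing with $\sum_k a_{ij}^k Z_k(f) = \sum_k a_{ij}^k \lambda_k f$ yields the necessary condition
\begin{equation*}
Z_i(\lambda_j) - Z_j(\lambda_i) = \sum_k a_{ij}^k \lambda_k.
\end{equation*}
To prove that this identity actually holds, I would apply the Jacobi identity to the triple $(X, Z_i, Z_j)$ and expand each double bracket using the \cinf-symmetry relation and the structure functions $a_{ij}^k$. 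Collecting the coefficient of $X$, the $\lambda_i \lambda_j$ contributions cancel between the $[[X, Z_i], Z_j]$ and $[[X, Z_j], Z_i]$ terms, leaving exactly the desired identity. I expect this algebraic verification to be the main obstacle; everything else will be a routine Frobenius argument.

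With compatibility in hand, I would introduce on $U \times \mathbb{R}$ (with extra coordinate $y$) the vector fields $\tilde Z_i := Z_i + \lambda_i\, y\, \partial_y$. A direct computation yields
\begin{equation*}
[\tilde Z_i, \tilde Z_j] = [Z_i, Z_j] + \bigl(Z_i(\lambda_j) - Z_j(\lambda_i)\bigr)\, y\, \partial_y = \sum_k a_{ij}^k \tilde Z_k,
\end{equation*}
so the distribution $\tilde{\mathcal{Z}} := \mathcal{S}(\{\tilde Z_1, \ldots, \tilde Z_r\})$ is involutive of rank $r$ on $U \times \mathbb{R}$. Fixing $p \in U$, I would choose a local $(n-r)$-dimensional transversal $\Sigma$ to $\mathcal{Z}$ through $p$, so that $\Sigma \times \{1\}$ is transverse to $\tilde{\mathcal{Z}}$. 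By the Frobenius theorem, the union of the integral leaves of $\tilde{\mathcal{Z}}$ through $\Sigma \times \{1\}$ is, near $(p, 1)$, an $n$-dimensional submanifold of $U \times \mathbb{R}$. Since $\tilde Z_i$ projects to $Z_i$ under the natural projection $U \times \mathbb{R} \to U$ and the $Z_i$'s are independent, this submanifold is locally the graph $y = f(x)$ of a smooth positive function $f$ defined near $p$. Tangency of the $\tilde Z_i$ to this graph is equivalent to $Z_i(f) = \lambda_i f$, so $f$ is the required symmetrizing factor.
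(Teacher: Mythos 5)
Your proposal is correct, but it follows a genuinely different route from the paper. The paper's proof is a short explicit construction: by Frobenius applied to $\mathcal{Z}$ there are first integrals $F_1,\ldots,F_{n-r}$ with $\mathcal{Z}^{\circ}=\mathcal{S}(\{dF_1,\ldots,dF_{n-r}\})$, and the pointwise independence of $\{Z_1,\ldots,Z_r,X\}$ forces $X(F_{i_0})\neq 0$ for some $i_0$; the paper then simply sets $f=1/X(F_{i_0})$ and verifies by a bracket computation (using $Z_k(F_{i_0})=0$, hence $[X,Z_i](F_{i_0})=\lambda_i X(F_{i_0})$) that $fX$ is a symmetry. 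You instead treat \eqref{condicionsymfactor} as an overdetermined system, derive its compatibility condition $Z_i(\lambda_j)-Z_j(\lambda_i)=\sum_k a_{ij}^k\lambda_k$ from the Jacobi identity for $(X,Z_i,Z_j)$ (where, as in the paper's argument, the pointwise independence of $\{Z_1,\ldots,Z_r,X\}$ is what lets you isolate the coefficient of $X$), and then solve the system by a graph construction for the lifted involutive distribution $\tilde{\mathcal{Z}}$ on $U\times\mathbb{R}$; I checked the bracket computation $[\tilde Z_i,\tilde Z_j]=\sum_k a_{ij}^k\tilde Z_k$ and the transversality argument, and both are sound. What the paper's route buys is brevity and an explicit, computable formula for $f$ in terms of a single first integral of $\mathcal{Z}$; what your route buys is a direct proof that the defining PDE system for symmetrizing factors is Frobenius-compatible, which makes the structure of the full solution set transparent (consistent with Corollary \ref{corosymfactor}(3)) and would survive in settings where the first integrals are not convenient to invoke. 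Both arguments are local and both ultimately rest on the Frobenius theorem, applied to $\mathcal{Z}$ in the paper and to the lift $\tilde{\mathcal{Z}}$ in your version.
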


\begin{proof}
Since $\mathcal{Z}$ is involutive, by  Frobenius Theorem \cite{warner} there exists (locally) a set of functionally independent smooth functions $ \{F_1, \ldots, F_{n-r}\} $ such that 
$$
\mathcal{Z}^{\circ}=\mathcal{S}(\{{dF_1},\ldots,{dF_{n-r}}\}).
$$ 
Since by Definition \ref{Csymmdistribution} the set $\{Z_1,\ldots,Z_r,X\}$ is pointwise linearly independent, 
there must exist some $i_0 \in \{1,\ldots,n-r\}$ such that, locally, $ X(F_{i_0})={dF_{i_0}}(X)\neq 0$ and therefore we can define the non-vanishing function 
$$
f=\frac{1}{X(F_{i_0})}.
$$
Let us prove  that $fX$ is a symmetry of $\mathcal{Z} $. Since $X$ is a \cinf-symmetry of $\mathcal{Z},$ by Definition \ref{Csymmdistribution}, for  $ i=1, \ldots, r,$ we can write
\begin{equation}\label{cuenta}
    \begin{array}{rcl}
\left[fX,Z_i\right]&=&\left[\dfrac{1}{X(F_{i_0})} X,Z_i\right]=
-Z_i \left(\dfrac{1}{X(F_{i_0})}\right) X+\dfrac{1}{X(F_{i_0})} \left[X,Z_i\right]=\\
& = & \dfrac{Z_i(X(F_{i_0}))}{X(F_{i_0})^2} X+\dfrac{1}{X(F_{i_0})} \left( \lambda_i X+\displaystyle\sum_{k=1}^r c_{ik} Z_k \right)=\\
&=& -\dfrac{[X,Z_i](F_{i_0})}{X(F_{i_0})^2} X+ \dfrac{\lambda_i}{X(F_{i_0})} X+\displaystyle\sum_{k=1}^r \dfrac{c_{ik}}{X(F_{i_0})} Z_k =\\
&=&\dfrac{\lambda_i X(F_{i_0}) -[X,Z_i](F_{i_0})}{X(F_{i_0})^2} X+\displaystyle\sum_{k=1}^r \dfrac{c_{ik}}{X(F_{i_0})} Z_k,
\end{array}
\end{equation} where the functions $c_{ik}$ are given in \eqref{deficinfsym}.

Since 
$$
[X,Z_i](F_{i_0})=\left(\lambda_i X+\sum_{k=1}^r c_{ik} Z_k\right)(F_{i_0})=\lambda_i X(F_{i_0}),
$$
it follows from \eqref{cuenta} that
$$
\left[fX,Z_i\right]=\sum_{k=1}^r \dfrac{c_{ik}}{X(F_{i_0})} Z_k,
$$
which proves that $ f X $ is a symmetry of $ \mathcal{Z}$.
\end{proof}

We conclude this section with the following statements concerning $\mathcal{C}^{\infty}$-symmetries and symmetrizing factors, which will be useful for further reference: 

\begin{corollary}\label{corosymfactor}
Let $\mathcal{Z}=\mathcal{S}(\{ Z_1,\ldots, Z_r \})$ be an involutive distribution on an open set $U\subseteq \mathbb{R}^n.$ 
    \begin{enumerate}[label=(\arabic*)]
        \item If $X$ is  a $\mathcal{C}^{\infty}$-symmetry of  $\mathcal{Z}$ then there exists a symmetry $Y$ of $\mathcal{Z}$ and a function $h$ such that $X=hY$.
        
        \item For   a $\mathcal{C}^{\infty}$-symmetry $X$ of  $\mathcal{Z},$ the corresponding system \eqref{condicionsymfactor} is compatible. 
        
        \item 
        Let $X$ be  a $\mathcal{C}^{\infty}$-symmetry of  $\mathcal{Z}$ with a given symmetrizing factor $f.$ A function $\widetilde{f}$ is a symmetrizing factor of $X$ w.r.t. $\mathcal{Z}$ if and only if $\widetilde{f}=fg$ for some first integral $g$ of the distribution $\mathcal Z$. 
    \end{enumerate}
\end{corollary}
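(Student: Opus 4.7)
The plan is to read off all three parts of the corollary as direct consequences of Theorem \ref{theor_symfactor}, Corollary \ref{coro_symfactor}, and the Leibniz rule. None of the items requires new machinery beyond what has already been established.

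For part (1), I would invoke Theorem \ref{theor_symfactor} to produce a non-vanishing symmetrizing factor $f$ for $X$ with respect to $\mathcal{Z}$. By the very definition of symmetrizing factor, $Y:=fX$ is then a symmetry of $\mathcal{Z}$. Setting $h:=1/f$, which is non-vanishing on $U$, one has $X=hY$, as claimed.

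For part (2), by Corollary \ref{coro_symfactor} a non-vanishing smooth function $f$ is a symmetrizing factor of $X$ w.r.t.\ $\mathcal{Z}$ if and only if it satisfies \eqref{condicionsymfactor}. Theorem \ref{theor_symfactor} guarantees the (local) existence of such an $f$, so \eqref{condicionsymfactor} admits a (local) non-vanishing solution and is therefore compatible.

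For part (3), I would argue by the direct equivalence of PDE systems. If $\widetilde f=fg$ with $g$ a first integral of $\mathcal Z$ (so $Z_i(g)=0$ for $i=1,\ldots,r$), then Leibniz and the hypothesis $Z_i(f)=\lambda_i f$ give
\begin{equation*}
Z_i(\widetilde f)=Z_i(f)g+f\,Z_i(g)=\lambda_i f g=\lambda_i\widetilde f,
\end{equation*}
so by Corollary \ref{coro_symfactor}, $\widetilde f$ is a symmetrizing factor. Conversely, if $\widetilde f$ is any symmetrizing factor, then both $f$ and $\widetilde f$ are non-vanishing and one can define $g:=\widetilde f/f$. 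A short computation using the quotient rule and $Z_i(f)=\lambda_i f$, $Z_i(\widetilde f)=\lambda_i\widetilde f$ yields $Z_i(g)=0$ for every $i$, i.e.\ $g$ is a first integral of $\mathcal Z$, and by construction $\widetilde f=fg$.

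There is no serious obstacle here; the only point that deserves care is ensuring that the candidate $g=\widetilde f/f$ in part (3) is a well-defined smooth function, which is automatic because symmetrizing factors are non-vanishing by definition. All three items rest on the existence result of Theorem \ref{theor_symfactor}; once that is in hand, the corollary is essentially a bookkeeping exercise.
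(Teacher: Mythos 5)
Your proof is correct and follows essentially the same route as the paper's: part (1) via the symmetrizing factor from Theorem \ref{theor_symfactor} with $h=1/f$, part (2) by combining Corollary \ref{coro_symfactor} with that existence theorem, and part (3) by the same Leibniz/quotient-rule computations showing $g=\widetilde f/f$ is a first integral and conversely. No gaps.
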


\begin{proof}

    \begin{enumerate}
        \item[(1)] Let $f$ be a non-vanishing symmetrizing factor of $X$ w.r.t $\mathcal{Z}$ provided by Theorem \ref{theor_symfactor}. The result follows by taking  $h=1/f.$ 
    
        \item[(2)] It is direct a consequence of Corollary \ref{coro_symfactor} and Theorem \ref{theor_symfactor}. 
        
        \item[(3)] If $f$ and $\widetilde{f}$ are two symmetrizing factors of $X$ w.r.t. $\mathcal{Z}$ then by \eqref{condicionsymfactor}  the function $g=\widetilde{f}/{f}$ satisfies
        $$
        Z_i(g)=Z_i\left(\frac{\widetilde{f}}{f}\right)=\frac{Z_i(\widetilde{f})f-\widetilde{f}Z_i(f)}{f^2}=\frac{\widetilde{f}}{f}\left(\frac{Z_i(\widetilde{f})}{\widetilde{f}}-\frac{Z_i(f)}{f} \right)=0 \text{, for } i=1,\ldots,r.
        $$ 
        Conversely, if $f$ is a symmetrizing factor of $X$ w.r.t. $\mathcal{Z}$  and a function $g$ satisfies $Z_i(g)=0$ for $1\leq i\leq r,$ then by \eqref{condicionsymfactor} we can write
        $$ 
        \lambda_i-\frac{Z_i(gf)}{gf}= \lambda_i-\frac{g Z_i(f)}{gf} =\lambda_i-\frac{Z_i(f)}{f}=0\text{, for } i=1,\ldots,r,  
        $$
        which proves that $\widetilde{f}=gf$ is a symmetrizing factor of $X$ w.r.t. $\mathcal{Z}.$ 
    \end{enumerate}
\end{proof}
\section{\texorpdfstring{\cinf}--structures and integrability of distributions}\label{secSymFactors}

The existence of symmetrizing factors of a  \cinf-symmetry $X,$ w.r.t. an involutive distribution of arbitrary corank,  has been proved in Theorem \ref{theor_symfactor}. In particular, when the corank is 1,  it has been shown that the knowledge of a symmetrizing factor for a  \cinf-symmetry $X$ allows to integrate the distribution by quadrature (Lemma \ref{lema_inverserelation} and Theorem \ref{factorgeneral}). In the following  section we aim to study the role of symmetrizing factors in the integrability by quadratures of a distribution of arbitrary  corank $n-r>1.$ In order to do that, we  need not only a single $\mathcal{C}^{\infty}$-symmetry of $\mathcal{Z}$ but an ordered sequence  of $n-r$ vector fields $\{X_1,\ldots,X_{n-r}\}$ that form a specific structure (introduced in \cite{pancinf-structures} with the name of \cinf-structure). For the sake of completeness, in this section we  present the main notions and results on \cinf-structures that are relevant for a better understanding of what follows.
\subsection{The notion of \texorpdfstring{\cinf}--structure}
A natural extension of the concept of solvable structure \cite{basarab,hartl1994solvable} can be attained when we consider \cinf-symmetries instead of standard symmetries: 



\begin{definition}[Definition 3.3 in \cite{pancinf-structures}]\label{Qsolvable}
Let $ \mathcal{Z}=\mathcal{S}(\{Z_1,\ldots,Z_r\}) $ be an involutive distribution on an open subset $U$ of $\mathbb{R}^n$. Let $ \langle X_1,\ldots, X_{n-r}\rangle $ be an ordered collection of vector fields on $U$ and let us define the following sets:  $\mathcal{X}_0:=\{Z_1,\ldots,Z_{r}\}$ and
$$
\mathcal{X}_k:=\{Z_1,\ldots,Z_{r},X_1,\ldots,X_{k}\}, \quad  \mbox{for} \quad 1\leq k\leq n-r.
$$
We will say that $ \langle X_1,\ldots, X_{n-r}\rangle $ is a {\bf \cinf-structure for $\mathcal{Z}$} if, for $1\leq k\leq n-r$, the vector field $X_k$ is a $ \mathcal{C}^{\infty}$-symmetry of the rank $r+k-1$ distribution $\mathcal{S}(\mathcal{X}_{k-1}).$
\end{definition}

\begin{remark}
\begin{enumerate}
    \item A solvable structure  a particular case of \cinf-structure in which every $X_k$ is a symmetry of $\mathcal{S}(\mathcal{X}_{k-1}),$ for $1\leq k\leq n-r$ \cite{basarab, hartl1994solvable,lychagin1991, sherring1992geometric,ChrisAthorne1998,Barco2001}. 
   \item As a direct consequence of Proposition \ref{prop_multiplicacion},  any of the elements $X_i$ of a \cinf-structure can be multiplied by a non-vanishing function $h$ in such a way that $\langle X_1,\ldots, hX_i,\ldots, X_{n-r}\rangle$ is also a  \cinf-structure for $\mathcal{Z}.$
\end{enumerate}
\end{remark}





From a dual point of view, the notion of \cinf-structure can be expressed in terms of 1-forms  as follows: 

\begin{definition}[Definition 3.4 in \cite{pancinf-structures}]\label{Qsolvabledual}
Let $\mathcal{Z}=\mathcal{S}(\{Z_1,\ldots,Z_r\})$ be an involutive distribution on an open set $U\subset\mathbb{R}^n$. Let $\langle \omega_1,\ldots, \omega_{n-r}\rangle$ be an ordered collection of linearly independent 1-forms on $U$ and define the sets
\begin{equation}\label{wk}
    \Lambda_k:=\{ \omega_{k+1},\ldots, \omega_{n-r}\}\quad\mbox{for}\quad 0\leq k\leq n-r-1\quad \mbox{and} \quad \Lambda_{n-r}=\emptyset.
\end{equation} 
We will say that $\langle \omega_1,\ldots, \omega_{n-r}\rangle$ is a {\bf
$\mathcal{C}^{\infty}$-structure of 1-forms} for $ \mathcal{Z} $ if the following two conditions hold:
\begin{enumerate}[label=(\alph*)]
\item $\mathcal{S}(\Lambda_0)=\mathcal{Z}^{\circ}$.
\item $\mathcal{I}(\Lambda_k) $ is a differential ideal for $k=1,\ldots, n-r-1$.
\end{enumerate}
\end{definition}

In \cite[Proposition 3.3]{pancinf-structures} it was established a correspondence between the notions introduced in definitions \ref{Qsolvable} and  \ref{Qsolvabledual}: given a \cinf-structure $\langle X_1,\ldots, X_{n-r}\rangle$  for $\mathcal{Z}$, there exists a \cinf-structure of 1-forms $\langle \omega_1,\ldots, \omega_{n-r}\rangle$ for $\mathcal{Z}$ such that
\begin{equation}\label{eq_condiciondual}
    \mathcal{S}\left( \mathcal{X}_{k} \right)^{\circ}=\mathcal{S}\left( \Lambda_{k} \right),\quad\mbox{for}\quad 0\leq k\leq n-r-1.  
\end{equation} Conversely,  given a \cinf-structure of 1-forms $\langle \omega_1,\ldots, \omega_{n-r}\rangle$  for $\mathcal{Z}$ there exists  a \cinf-structure $\langle X_1,\ldots, X_{n-r}\rangle$  for $\mathcal{Z}$  such that
\begin{equation}\label{eq_condiciondual2}
   \mathcal{S}\left( \Lambda_{k} \right)^{{\perp}}= \mathcal{S}\left( \mathcal{X}_{k} \right),\quad\mbox{for}\quad 0\leq k\leq n-r-1.  
\end{equation}
If any of the two equivalent conditions \eqref{eq_condiciondual} or \eqref{eq_condiciondual2} holds,  we will say that $\langle X_1,\ldots, X_{n-r}\rangle$  and   $\langle \omega_1,\ldots, \omega_{n-r}\rangle$ are {\bf $\mathcal{Z}$-related}. 
\subsection{Construction of \texorpdfstring{$\mathcal{Z}$}--related  \texorpdfstring{\cinf}--structures}
In practice, a procedure to construct $\mathcal{Z}$-related \cinf-structures can be carried out as follows. Let  $\{x_1,\ldots,x_n\}$ be a local system of coordinates on $U$ and set the volume form $\boldsymbol{\Omega}=dx_1\wedge dx_2\wedge \ldots \wedge dx_n.$ 

Given a \cinf-structure $\langle X_1,\ldots, X_{n-r}\rangle$  for $\mathcal{Z}$, we consider the 1-forms 
    \begin{equation}\label{contractedforms}
            \omega_i:={X_{n-r}\,\lrcorner\,\ldots\,\lrcorner\,\widehat{X_i}\,\lrcorner\,\ldots\,\lrcorner\,X_{1}\contract Z_{r}\contract \ldots\,\lrcorner\,Z_{1}\,\lrcorner\,\boldsymbol{\Omega}}, \quad \mbox{for} \quad 1\leq i\leq n-r,
    \end{equation} 
    where $\widehat{X_i}$ indicates omission of $X_i.$ 
Then it can be checked that $\langle \omega_1,\ldots, \omega_{n-r}\rangle$ is a  $\mathcal{Z}$-related  \cinf-structure of 1-forms. 
\begin{remark}
    We observe that by  the independence of the set $\mathcal{X}_{n-r},$ the determinant
\begin{equation}\label{delta}
    \Delta={X_{n-r}\,\lrcorner\,\ldots\,\lrcorner\,X_{1}\contract Z_{r}\contract \ldots\,\lrcorner\,Z_{1}\,\lrcorner\,\boldsymbol{\Omega}}=\left|\begin{array}{ccc} Z_1^1 & \ldots & Z_1^n\\ 
   \vdots &   \vdots & \vdots  \\
    Z_r^1 & \ldots & Z_r^n\\ 
    X_1^1 & \ldots & X_1^n\\ 
     \vdots &   \vdots & \vdots  \\
    X_{n-r}^1 & \ldots & X_{n-r}^n\\\end{array}\right|
\end{equation} does not vanish on $U.$ Therefore the 1-forms 
\begin{equation}\label{contractedformsnormalizadas}
\sigma_i:= \dfrac{(-1)^{n-r-i}}{\Delta}\omega_i \end{equation} where the $\omega_i$ are defined in \eqref{contractedforms},  are well defined on $U$ and they  define a new $\mathcal{Z}$-related  \cinf-structure of 1-forms satisfying the conditions
\begin{equation}\label{dualidad} 
    X_i\,\lrcorner\,\sigma_j=\delta_{ij}, \quad i,j=1,\ldots, n-r.
\end{equation} Any pair of $\mathcal{Z}$-related \cinf-structures  $\langle X_1,\ldots, X_{n-r}\rangle$  and   $\langle \sigma_1,\ldots, \sigma_{n-r}\rangle$ satisfying \eqref{dualidad} will be called {\bf dually $\mathcal{Z}$-related}. 
\end{remark}

\subsection{Procedure of integration by \texorpdfstring{\cinf}--structures}\label{algorithm}

The notion of \cinf-structure introduced in the previous sections plays a fundamental role in the determination of the integral manifolds of the distribution $\mathcal{Z}.$ The elements of a known  \cinf-structure of 1-forms for $\mathcal{Z}$ permit the stepwise construction of such integral manifolds through the integration of $n-r$ completely integrable Pfaffian equations \cite[Theorem 3.5]{pancinf-structures}.

The main ideas of such constructive process can be organized through the following procedure: 
\begin{enumerate}
    \item Given a \cinf-structure  $\langle X_1,\ldots, X_{n-r}\rangle$  for $\mathcal{Z}$, construct the $\mathcal{Z}$-related \cinf-structure of 1-forms $\langle \omega_1, \ldots,\omega_{n-r}\rangle$ defined by \eqref{contractedforms}. 

  For $1\leq i\leq n-r$ and  $1\leq j\leq n,$ let $\Delta_{r+i,j}$ denote  the determinant formed by deleting the $(r+i)$th row and the $j$th column in \eqref{delta}. Then it can be checked that the expression of the 1-form $\omega_i$ defined in \eqref{contractedforms} becomes
  $\omega_i=\omega_i^1dx_1+\ldots+\omega_i^ndx_n,$ where 
 $\omega_i^j=(-1)^{n+j}\Delta_{r+i,j}$ for $1\leq j\leq n.$ 
    
    
    \item \label{it_solvePfaf} Solve the completely integrable Pfaffian equation $\omega_{n-r}\equiv 0,$ obtaining a smooth function $I_{n-r}$ on an open set $U_{n-r}\subseteq U\subseteq \mathbb{R}^n,$ which defines the  corresponding integral manifolds in the form $\Sigma_{(C_{n-r})}=\{x\in U_{n-r}: I_{n-r}(x)=C_{n-r}\},$ where $C_{n-r}\in \mathbb{R}.$ 
    
    \item Let $\iota_{n-r}:N_{n-r-1}\to \Sigma_{(C_{n-r})},$ where $N_{n-r-1}$ is an open subset of $\mathbb{R}^{n-1},$  be a local  parametrization of $\Sigma_{(C_{n-r})}.$  For  $1\leq i\leq n-r,$ set $\widetilde{\omega}_{i}:=\iota_{n-r}^*(\omega_i)$ and denote by $\widetilde{\mathcal{Z}}$ the restriction of ${\mathcal{Z}}$ to  $N_{n-r-1},$ i.e., $\widetilde{\mathcal{Z}}=\mathcal{S}(\{{(\iota_{n-r}})_*(Z_1),\ldots,({\iota_{n-r}})_*(Z_r)\}).$
    
    \item Since  $\langle \widetilde{\omega}_1,\ldots,\widetilde{\omega}_{n-r-1}\rangle$
    is a \cinf-structure of 1-forms w.r.t.  $\widetilde{\mathcal{Z}}$ 
    we can move back to step \ref{it_solvePfaf}, and solve the completely integrable Pfaffian equation $\widetilde{\omega}_{n-r-1}\equiv 0.$ Observe that this Pfaffian equation is defined on  $N_{n-r-1}\subset\mathbb{R}^{n-1},$ i.e., the dimension has been lowered by one, although the equation depends on the constant $C_{n-r}.$ 
    
   \item This process can be continued and at any intermediate step we have a \cinf-structure of 1-forms defined on an open set $N_k \subseteq \mathbb{R}^{r+k}.$ Such   \cinf-structure has  $k$ elements that will be simply denoted by $\langle \widehat{\omega}_1,\ldots,\widehat{\omega}_{k}\rangle,$ in order to not complicate the notation. 
   Now the integration of the completely integrable Pfaffian equation
    $\widehat{\omega}_{k}\equiv 0
    $
   provides a smooth function 
   $I_k$ on some open set $U_k\subseteq \mathbb{R}^{r+k}$ which defines the corresponding integral manifolds: $$\Sigma_{(C_k,C_{k+1},\ldots,C_{n-r})}=\{x\in U_k: I_k(x)=C_k\}.$$ As before, we consider a local parametrization $\iota_k:N_{k-1}\to \Sigma_{(C_k,C_{k+1},\ldots,C_{n-r})},$  defined on an open set $N_{k-1}$  of $\mathbb{R}^{r+k-1},$ 
   and  the procedure can be continued.
   

    \item After $n-r$ steps,  we get a local parametrization $\iota_1:N_0\to \Sigma_{(C_1,C_2,\ldots,C_{n-r})},$ defined on some open set $N_0$  of $\mathbb{R}^r,$ of the corresponding integral manifold of the Pfaffian equation $\widehat{\omega}_1=0.$ Then the integral manifolds of $\mathcal{Z}$ are given by
     $$
    \iota_{n-r}\circ \ldots \circ \iota_2 \circ \iota_1(N_0),
    $$
   which depend on $(n-r)$ constants $C_{1}, \ldots,C_{n-r}$.

\end{enumerate}
\begin{remark}
    A relevant observation is that, in the previous procedure, only the function $I_{n-r}$ is in fact a first integral of $\mathcal{Z}$ but, for $1\leq k \leq n-r-1,$  the functions $I_k,$  defined  on a space of dimension $r+k,$ are not. 
\end{remark} 

\subsection{Example}

In this subsection we include an illustrative example to show how the previous procedure can be used to integrate an involutive distribution of two vectors field on $\mathbb{R}^4.$

\begin{example}\label{ex_distribucion}

Let us consider the distribution $\mathcal{Z}=\mathcal{S}(\{Z_1,Z_2\})$ on $\mathbb{R}^4$ generated by the vector fields
\begin{equation}\label{Z_ex1}
    \begin{array}{l}
    Z_1=(x_2-x_3 x_4) \partial_{x_2}-x_3 \partial_{x_3}+x_4 \partial_{x_4},\quad
    Z_2=\partial_{x_1}-x_3^2 \partial_{x_3}+(2 x_3 x_4-x_2)\partial_{x_4}.
\end{array}
\end{equation}

The distribution $\mathcal{Z}$ is involutive   because $[Z_1,Z_2]=-x_3 Z_1$. In this example we firstly determine a \cinf-structure for $\mathcal{Z},$ which will be  used to integrate  completely the given distribution. 

As a first element $X_1$ of a \cinf-structure for $\mathcal{Z}$
we can choose the vector field $X_1=x_4 \partial_{x_2}+\partial_{x_3},$ because the commutation relationships of $X_1$ with $Z_1$ and $Z_2$ become
$$
\begin{array}{l}
[X_1,Z_1]=-X_1, \quad
\left[X_1,Z_2\right]=Z_1-x_3 X_1.
\end{array}
$$
According to  \cite[Remark 3.2 (d)]{pancinf-structures},  the second element $X_2$ of a \cinf-structure for $\mathcal{Z}$ in some open set $U\subseteq \mathbb{R}^4$ can be any vector field such that the set $\{Z_1, Z_2,X_1,X_2\}$ is linearly independent on $U$. If we  choose, for instance,  $X_2=\partial_{x_4},$ then 
\begin{equation}\label{det_ex1}
    \Delta={X_{2}\,\lrcorner\,X_{1}\,\lrcorner\, Z_{2} \,\lrcorner\,Z_{1}\,\lrcorner\,\boldsymbol{\Omega}}=-x_2,
\end{equation} and hence the ordered set   $\langle X_1,X_2\rangle$ where \begin{equation}\label{cinf_ex1}
      X_1= x_4 \partial_{x_2}+\partial_{x_3},\quad X_2=\partial_{x_4},
  \end{equation} is a \cinf-structure for $\mathcal{Z}$ on the open set $U=\{(x_1,x_2,x_3,x_4)\in \mathbb{R}^4: x_2\neq 0\}.$
We can integrate $\mathcal{Z}$ by following the steps described at the beginning of this Section \ref{algorithm}:
\begin{enumerate}
    \item We use $\langle X_1,X_2\rangle$ to  construct the $\mathcal{Z}$-related \cinf-structure of 1-forms $\langle \omega_1,\omega_2\rangle$ defined by \eqref{contractedforms}. The coordinates of such 1-forms can be calculated from the corresponding determinant in \eqref{delta} as it was explained in the first step of the integration procedure: 
\begin{equation}\label{omega1_ex}  
    \omega_1=x_3^2(x_2-x_3 x_4)dx_1+{x_3} dx_2+(x_2-x_3 x_4)dx_3.
\end{equation}
Similarly, 
\begin{equation}\label{omega2_ex}
   \omega_2=
-(x_2-x_3x_4)^2dx_1+{x_4} dx_2-{x_4^2}dx_3-x_2dx_4.
\end{equation}
\item In the second step we have to solve the completely integrable  Pfaffian equation
$
\omega_2\equiv 0.
$  The corresponding integral manifolds are implicitly defined in terms of a smooth function  $I_2=I_2(x_1,x_2,x_3,x_4)$ satisfying the condition $dI_2\wedge\omega_2=0.$ It can be checked that this condition leads to the following system of functionally independent determining equations for $I_2:$
\begin{equation}\label{system_ex1}
    \begin{array}{lll}
                             x_2(I_2)_{x_2}+x_4  (I_2)_{x_4}  & = &0, \\
                              x_2(I_2)_{x_3}-x_4^2  (I_2)_{x_4}  & = &0, \\  
                              x_2(I_2)_{x_1}-(x_2-x_3x_4)^2  (I_2)_{x_4}  & = &0.\\
    \end{array}
\end{equation} The general solution of the first equation in \eqref{system_ex1} becomes 
\begin{equation}\label{formaI2}
    I_2=F_0\left(x_1,x_3,\dfrac{x_4}{x_2}\right).
\end{equation} From \eqref{system_ex1} we obtain that  the smooth function $F_0=F_0(a,b,c)$ must satisfy the system 
\begin{equation}\label{system2_ex1}
    \begin{array}{lll}
                              (F_0)_b-c^2   (F_0)_c  & = &0, \\  
                               (F_0)_a-(1-bc)^2   (F_0)_c  & = &0.
    \end{array}
\end{equation} From the first equation in \eqref{system2_ex1} we get 
$$F_0=F_1\left(a,\dfrac{1}{c}-b\right),$$ where $F_1=F_1(r,t)$ is any solution to the equation $$(F_1)_r+t^2(F_1)_t=0.$$ A particular solution becomes $F_1(r,t)=r+1/t,$ which leads to the particular solution $F_0(a,b,c)=a+\dfrac{c}{1-bc}$ of system \eqref{system2_ex1}. By \eqref{formaI2}, the corresponding particular solution for system \eqref{system_ex1} becomes: 
\begin{equation}\label{I2sol}
    I_2=
    x_1+\dfrac{x_4}{x_2-x_3 x_4},
\end{equation} which is defined on the open set $U_2=\{(x_1,x_2,x_3,x_4)\in U: x_2-x_3 x_4\neq 0\}.$
The function given in \eqref{I2sol} is a first integral for the involutive distribution $\mathcal{S}(\mathcal{X}_1)=\mathcal{S}(\{Z_1,Z_2,X_1\}).$ 

In consequence, the integral manifolds of $\omega_2\equiv 0$ can be locally defined as  $$\Sigma_{(C_2)}=\{(x_1,x_2,x_4,x_4)\in U_2: x_1+\dfrac{x_4}{x_2-x_3 x_4}=C_2\},\quad C_2\in \mathbb{R}.$$ 

\item A local parametrization of $\Sigma_{(C_2)}$ is given, for instance, by the function $\iota_2:N_1\to\Sigma_{(C_2)}$  defined by 
$$\iota_2(x_1,x_2,x_3)=\left(x_1,x_2,x_3,\dfrac{x_2(C_2-x_1)}{1+x_3(C_2-x_1)}\right)$$
on the open set 
$$N_1=\{(x_1,x_2,x_3)\in \mathbb{R}^3: 1+x_3(C_2-x_1)\neq0\}.$$
The pullback  by $\iota_2$ of the 1-form $\omega_1$ given in \eqref{omega1_ex} becomes 
\begin{equation}\label{omega1_ex1}
    \widetilde{\omega}_1:=\iota_2^*(\omega_1)=\frac{x_2x_3^2}{1+x_3(C_2 -x_1)} dx_1+{x_3}dx_2+\frac{{x_2}}{1+x_3(C_2 -x_1)}dx_3,
\end{equation}

and defines a \cinf-structure of 1-forms $\langle \widetilde{\omega}_1 \rangle$ for the restriction of $\mathcal{Z}$ to $N_1\subset\mathbb{R}^3.$

\item The Pfaffian equation $\widetilde{\omega}_1\equiv 0$ is completely integrable. From the condition $dI_1\wedge \widetilde{\omega}_1=0$ we get the following system of determining equations for $I_1=I_1(x_1,x_2,x_3;C_2):$

\begin{equation}\label{system3_ex1}
    \begin{array}{lll}
                             (I_1)_{x_1}+x_3  (I_1)_{x_3}  & = &0, \\
                              x_2(I_1)_{x_2}-x_3(1+x_3(C_2-x_1))  (I_1)_{x_3}  & = &0.\\  
    \end{array}
\end{equation}
From the first equation in \eqref{system3_ex1} we get
$$I_1=G_0\left(x_2, \dfrac{1}{x_3}-x_1\right),$$ where, by the second equation in \eqref{system3_ex1}, the function $G_0=G_0(a,b)$ must satisfy the equation 
$$aG_a+(b+C_2)G_b=0.$$ A particular solution for this last equation becomes $G(a,b)=\dfrac{C_2+b}{a},$ giving rise to the following particular solution for system \eqref{system3_ex1}: 
\begin{equation}\label{I1_ex1}
    I_1=\dfrac{1+x_3(C_2-x_1)}{x_2x_3},
\end{equation} defined on the open set 
$U_1=\{(x_1,x_2,x_3)\in N_1: x_2x_3\neq0\}.$ In consequence, the integral manifolds of $\widetilde{\omega}_1\equiv 0$ become
$$\Sigma_{(C_1,C_2)}=\{(x_1,x_2,x_3)\in U_1: \dfrac{1+x_3(C_2-x_1)}{x_2x_3}=C_1\},\quad C_1,C_2\in \mathbb{R}.$$ 
\item For $\Sigma_{(C_1,C_2)}$ we can choose the local parametrization $\iota_1:N_0\to \Sigma_{(C_1,C_2)}$  given by the function $$\iota_1(x_1,x_2)=\left(x_1,x_2,\frac{1}{x_1+C_1 x_2-C_2} \right)$$ defined on the open set $N_0=\{(x_1,x_2)\in \mathbb{R}^2: x_1+C_1 x_2-C_2\neq 0\}.$
\item In order to obtain the 2-dimensional integral manifolds of $\mathcal{Z}$ we first calculate $\iota_2\circ\iota_1.$ For $(x_1,x_2)\in N_0$ and $C_1\neq 0,$ we have
$$\iota_2\circ\iota_1(x_1,x_2)=
\left( x_1,x_2,\frac{1}{x_1+C_1 x_2-C_2} , \dfrac{(C_2-x_1)(x_1+C_1 x_2-C_2)}{C_1}\right).$$
Therefore the integral manifolds for the distribution $\mathcal{Z}$ are locally parametrized by $\iota_2\circ\iota_1$ on $N_0.$ Alternatively, such integral manifolds can be implicitly described by the equations:
\begin{equation}
    x_3=\frac{1}{x_1+C_1 x_2-C_2} , \quad x_4=\dfrac{(C_2-x_1)(x_1+C_1 x_2-C_2)}{C_1}.
\end{equation}
\end{enumerate}
\begin{remark}
     A possible variant of the previous procedure is the following. 
We first  observe that only $\omega_2$ is necessary to carry out the step 2.  The 1-form $\omega_1$ calculated in \eqref{omega1_ex} has been used in the step 3 to determine the 1-form \eqref{omega1_ex1} whose Pfaffian equation is integrated in the fourth step. An alternative way to construct such Pfaffian equation proceeds along the next lines: 
    \begin{enumerate}
        \item Step 1: Calculate only $\omega_2,$ as in \eqref{omega2_ex}, from a known \cinf-symmetry $X_1$ of the distribution $\mathcal{Z}$ (the knowledge of the last element $X_2$ of the \cinf-structure  is not necessary). 
        \item Step 2: Integrate the Paffian equation $\omega_2\equiv 0,$ as in the previous procedure.
        \item Step 3: Restrict the distribution $\mathcal{Z}$ to $N_2$ through $({\iota_2})_*:$ $\widetilde{\mathcal{Z}}=\mathcal{S}(\{{(\iota_{2}})_*(Z_1),({\iota_{2}})_*(Z_2)\}).$ Then the Pfaffian equation $\widetilde{\omega}_1\equiv 0$ arises from any \cinf-structure of 1-forms $\langle \widetilde{\omega}_1\rangle$ for $\widetilde{\mathcal{Z}}.$
    \end{enumerate}
\end{remark}
\end{example}

\section{Relative  symmetrizing factors and integrating  factors}\label{sec_4}

When for an involutive distribution $\mathcal{Z}=\mathcal{S}(\{Z_1\ldots,Z_r\})$ we know not only a single \cinf-symmetry, but a \cinf-structure $\langle X_1,\ldots,X_{n-r}\rangle,$ then Theorem \ref{theor_symfactor} demonstrates, for $1\leq i\leq n-r,$ the existence of a symmetrizing factor for $X_i$ w.r.t. $\mathcal{S}(\mathcal{X}_{i-1})$ (or  a $\mathcal{S}(\mathcal{X}_{i-1})$-symmetryzing factor for short).  In what follows we investigate the role of these symmetrizing factors in the integrability of the  Pfaffian equations arising from the application of the procedure explained in Section \ref{algorithm}.

First we extend the result given in Lemma  \ref{lema_inverserelation} to \cinf-structures of distributions of arbitrary corank. With this aim we first  need the following definition:

\begin{definition}
Let $\langle \omega_1,\ldots,\omega_{n-r}\rangle$ be a \cinf-structure of 1-forms for an involutive distribution $\mathcal{Z}$. Given ${i_0}\in\{1,\ldots,n-r\}$ we will say that a non-vanishing smooth function $\mu$ on $U$ is a \emph{integrating factor of $\omega_{i_0}$ with respect to $\mathcal{S}(\Lambda_{i_0})$} (or a $\mathcal{S}(\Lambda_{i_0})$-integrating factor for short)  if $\mu\omega_{i_0}$ is closed module $\Lambda_{i_0}$, i.e., if $d(\mu\omega_{i_0})\in \mathcal{I}(\Lambda_{i_0}).$ 
\end{definition}

\begin{remark}\label{relativeIF}
\begin{enumerate}
\item Since $\Lambda_{n-r}=\emptyset,$  the condition $d(\mu\omega_{n-r})\in \mathcal{I}(\Lambda_{n-r})$ must be understood as $d(\mu\omega_{n-r})=0,$ i.e., in this case $\mu$ is in fact an integrating factor for $\omega_{n-r}.$

    \item Although in the previous definition the term {\it {integrating factor}} has been used, a relevant observation is that for $i_0<n-r$, the 1-form $\omega_{i_0}$ is not, in general, Frobenius integrable, and therefore an integrating factor for $\omega_{i_0}$ does not exist. However, if $\mu$ is an integrating factor  of $\omega_{i_0}$ w.r.t. $\mathcal{S}(\Lambda_{i_0}),$  then the restriction of $d(\mu\omega_{i_0})$ to the integral manifolds of the involutive distribution $\mathcal{S}(\mathcal{X}_{i_0})$ must vanish, because $\mathcal{S}( \mathcal{X}_{i_0})^{\circ} = \mathcal{S}(\Lambda_{i_0}).$ In consequence, the knowledge of the function $\mu$ yields the integration by quadrature of the corresponding Pfaffian equation $\widehat{\omega}_{i_0}\equiv 0$ stated in the fifth step of the procedure described in Section \ref{algorithm}.  On the other hand, for $i_0\neq n-r,$ an integrating factor  of $\widehat{\omega}_{i_0}$  
is a smooth function defined on a space of dimension $r+i_0<n,$ whereas a $\mathcal{S}(\Lambda_{i_0})$-integrating factor must be a smooth function defined on some open set of $\mathbb{R}^n.$ 

\item Given a pair of $\mathcal{Z}$-related \cinf-structures  $\langle X_1,\ldots, X_{n-r}\rangle$ and $\langle \omega_1,\ldots, \omega_{n-r}\rangle$  for $\mathcal{Z},$ an integrating factor  of $\omega_{i_0}$ w.r.t. $\mathcal{S}(\Lambda_{i_0}),$ for $1\leq i_0\leq n-r-1,$  will be called a {\bf relative integrating factor}, in order to emphasize that such factor {\bf does not} convert the corresponding 1-form $\omega_{i_0}$ into an exact differential form.

 \item Given a pair of $\mathcal{Z}$-related \cinf-structures  $\langle X_1,\ldots, X_{n-r}\rangle$ and $\langle \omega_1,\ldots, \omega_{n-r}\rangle$  for $\mathcal{Z},$   a symmetrizing factor for $X_{i_0}$ w.r.t.  $\mathcal{S}(\mathcal{X}_{{i_0}-1}),$ for $2\leq i_0\leq n-r,$   will be called  a {\bf relative symmetrizing factor}, in order to emphasize that such factor {\bf does not} convert the corresponding vector field $X_{i_0}$ into a symmetry of $\mathcal{Z}$. Only when  $i_0=1,$ a  symmetrizing factor for $X_{1}$ w.r.t.  $\mathcal{S}(\mathcal{X}_{0})=\mathcal{Z}$ converts  $X_1$ into a symmetry of $\mathcal{Z}.$
 
\end{enumerate} 

\end{remark}

The following result generalizes, for distributions of arbitrary corank, the relationship between integrating factors and symmetrizing factors  established in Lemma \ref{lema_inverserelation} for distributions of corank 1:
\begin{lemma}\label{relacioninversa}
Let $\mathcal{Z}=\mathcal{S}(\{Z_1,\ldots,Z_r\})$ be an involutive distribution on an open set $U\subset\mathbb{R}^n$. Assume that $\langle X_1,\ldots, X_{n-r}\rangle$ and $\langle \omega_1,\ldots, \omega_{n-r}\rangle$ is a pair of $\mathcal{Z}$-related \cinf-structures. Suppose that there exists ${i_0}\in\{1,\ldots,n-r\}$  such that  $X_{i_0}\,\lrcorner\, \omega_{i_0}=1$. A non-vanishing smooth function $f$ is a symmetrizing factor for $X_{i_0}$ w.r.t.  $\mathcal{S}(\mathcal{X}_{i_0 -1})$ if and only if the function  $\mu=1/f$ is a $\mathcal{S}(\Lambda_{i_0})$-integrating factor of $\omega_{i_0}$.
\end{lemma}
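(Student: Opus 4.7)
The plan is to reduce both sides of the equivalence to a single algebraic condition on the 1-form $df-f\beta$, where $\beta$ will come from the structure equation for $d\omega_{i_0}$. The key inputs will be: the duality $\mathcal{S}(\mathcal{X}_{i_0-1})^{\circ}=\mathcal{S}(\Lambda_{i_0-1})$ from $\mathcal{Z}$-relatedness; the vanishing $X_{i_0}\,\lrcorner\,\omega_k=0$ for $k>i_0$ (because $\omega_k\in\mathcal{S}(\mathcal{X}_{i_0})^{\circ}$ while $X_{i_0}\in\mathcal{S}(\mathcal{X}_{i_0})$), together with the normalization $X_{i_0}\,\lrcorner\,\omega_{i_0}=1$; and the fact that $\mathcal{I}(\Lambda_{j-1})$ is a differential ideal for every $j\in\{1,\ldots,n-r\}$ (by Definition 3.4 for $j\geq 2$, and by involutivity of $\mathcal{Z}$ for $j=1$).

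I would first reformulate the symmetry condition. Because $\mathcal{S}(\mathcal{X}_{i_0-1})^{\circ}=\mathcal{S}(\{\omega_{i_0},\ldots,\omega_{n-r}\})$, the vector field $fX_{i_0}$ is a symmetry of $\mathcal{S}(\mathcal{X}_{i_0-1})$ if and only if $\mathcal{L}_{fX_{i_0}}\omega_j\in\mathcal{S}(\Lambda_{i_0-1})$ for $j=i_0,\ldots,n-r$. For $j>i_0$, Cartan's formula together with $X_{i_0}\,\lrcorner\,\omega_j=0$ reduces this to $fX_{i_0}\,\lrcorner\,d\omega_j$, and the structure equation $d\omega_j\in\mathcal{I}(\Lambda_{j-1})$, combined again with $X_{i_0}\,\lrcorner\,\omega_k=0$ for $k\geq j>i_0$, forces the contraction to lie in $\mathcal{S}(\Lambda_{j-1})\subseteq\mathcal{S}(\Lambda_{i_0-1})$ \emph{independently of $f$}. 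Hence the whole symmetry condition collapses to the single requirement $\mathcal{L}_{fX_{i_0}}\omega_{i_0}\in\mathcal{S}(\Lambda_{i_0-1})$.

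Writing $d\omega_{i_0}=\beta\wedge\omega_{i_0}+\eta$ with $\eta\in\mathcal{I}(\Lambda_{i_0})$ (which is possible since $d\omega_{i_0}\in\mathcal{I}(\Lambda_{i_0-1})$), a direct Cartan computation using $X_{i_0}\,\lrcorner\,\omega_{i_0}=1$ and $X_{i_0}\,\lrcorner\,\omega_k=0$ for $k>i_0$ will give
\[
\mathcal{L}_{fX_{i_0}}\omega_{i_0}\equiv df-f\beta\pmod{\mathcal{S}(\Lambda_{i_0-1})},
\]
while expanding $d(\mu\omega_{i_0})=-f^{-2}df\wedge\omega_{i_0}+f^{-1}d\omega_{i_0}$ and substituting the same structure equation produces
\[
d(\mu\omega_{i_0})=\tfrac{1}{f^{2}}(f\beta-df)\wedge\omega_{i_0}+\tfrac{1}{f}\eta.
\]
Consequently, the relative integrating-factor condition $d(\mu\omega_{i_0})\in\mathcal{I}(\Lambda_{i_0})$ reduces to $(df-f\beta)\wedge\omega_{i_0}\in\mathcal{I}(\Lambda_{i_0})$.

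To close the equivalence, I would prove a small bridging lemma: for the linearly independent 1-forms $\omega_{i_0},\ldots,\omega_{n-r}$, a 1-form $\zeta$ satisfies $\zeta\wedge\omega_{i_0}\in\mathcal{I}(\Lambda_{i_0})$ if and only if $\zeta\in\mathcal{S}(\Lambda_{i_0-1})$; this is verified by extending $\{\omega_{i_0},\ldots,\omega_{n-r}\}$ to a local coframe of $U$ and comparing coefficients. Applying this with $\zeta=df-f\beta$ identifies the two reduced conditions and yields the desired equivalence. The main obstacle is bookkeeping rather than conceptual: the symmetry side lives at the level of 1-forms modulo $\mathcal{S}(\Lambda_{i_0-1})$, while the integrating-factor side lives at the level of 2-forms modulo $\mathcal{I}(\Lambda_{i_0})$, and the bridging lemma is precisely the translator between these two quotients.
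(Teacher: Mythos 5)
Your proof is correct, and it reaches the conclusion by a genuinely different (dual) route from the paper's. The paper works on the vector-field side: it writes the commutation relations $[X_{i_0},Z_i]=\lambda_i X_{i_0}+A_i$ and $[X_{i_0},X_j]=\gamma_j X_{i_0}+B_j$, reduces the symmetrizing-factor condition to the scalar system $Z_i(f)/f=\lambda_i$, $X_j(f)/f=\gamma_j$ via Corollary \ref{coro_symfactor}, and then connects this to the relative integrating-factor condition by proving the contraction identities $Z_i\,\lrcorner\,\sigma=\lambda_i$ and $X_j\,\lrcorner\,\sigma=\gamma_j$ from the invariant formula for $d\omega_{i_0}(\,\cdot\,,X_{i_0})$. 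You instead stay entirely on the side of forms: you characterize the symmetry condition as $\mathcal{L}_{fX_{i_0}}\omega_j\in\mathcal{S}(\Lambda_{i_0-1})$ for $j=i_0,\ldots,n-r$, observe that the conditions with $j>i_0$ hold automatically (a simplification the paper never needs to make explicit because it works with brackets), and reduce both sides of the equivalence to membership statements about the single 1-form $df-f\beta$, which is $-f^{2}(d\mu+\mu\sigma)$ in the paper's notation. Your bridging lemma ($\zeta\wedge\omega_{i_0}\in\mathcal{I}(\Lambda_{i_0})$ iff $\zeta\in\mathcal{S}(\Lambda_{i_0-1})$) is precisely the step the paper uses implicitly when it passes from $d(\mu\omega_{i_0})\in\mathcal{I}(\Lambda_{i_0})$ to $d\mu+\mu\sigma\in\mathcal{S}(\Lambda_{i_0-1})$, so stating and proving it via a coframe extension, as you propose, is sound and even fills in a detail the paper glosses over. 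What your route buys is that the functions $\lambda_i,\gamma_j$ and the contraction computations \eqref{sobreZi}--\eqref{fin1} disappear entirely, with the only nontrivial linear algebra isolated in the bridging lemma; what the paper's route buys is the explicit PDE systems \eqref{symfactor} and \eqref{intfactor2} for the factors, which tie directly into Corollary \ref{coro_symfactor} and the uniqueness statement in Corollary \ref{corosymfactor}. Both arguments ultimately rest on the same decomposition $d\omega_{i_0}=\sigma\wedge\omega_{i_0}+\sum_{k=i_0+1}^{n-r}\sigma_k\wedge\omega_k$ and on the normalization $X_{i_0}\,\lrcorner\,\omega_{i_0}=1$.
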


\begin{proof}
By Definition \ref{Qsolvable},  the vector field  $X_{i_0}$ is a $\mathcal{C}^{\infty}$-symmetry of $\mathcal{S}(\mathcal{X}_{i_0 -1})$ and therefore we can write, for $1\leq i\leq r$ and $1\leq j\leq i_0-1:$
\begin{equation} \label{cuenta0}
[X_{i_0},Z_i]=\lambda_i X_{i_0}+A_i, \qquad     [X_{i_0},X_j]=\gamma_j X_{i_0}+B_j
\end{equation}
for certain smooth functions $\lambda_i,\gamma_j$ and vector fields $A_i,B_j\in \mathcal{S}(\mathcal{X}_{i_0 -1}).$ 
According to Corollary \ref{coro_symfactor} 
a  non-vanishing smooth function $f_{i_0}$ is a symmetrizing factor for $X_{i_0}$ w.r.t.  $\mathcal{S}(\mathcal{X}_{i_0 -1})$ if and only if $f_{i_0}$ satisfies the system
\begin{equation}\label{symfactor}
    \lambda_i-\dfrac{Z_i(f)}{f}=0, \quad
    \gamma_j-\dfrac{X_j(f)}{f}=0, \quad \mbox{for}\quad 1\leq i\leq r,\,1\leq j\leq i_0-1.
\end{equation}

On the other hand, by  Definition \ref{Qsolvabledual}, $\mathcal{I}(\Lambda_{i_0-1})$ is a differential ideal 
and hence there exist 1-forms $\sigma, \sigma_{k}$ such that
\begin{equation}\label{dw}
    d\omega_{i_0}=\sigma\wedge \omega_{i_0}+\sum_{k=i_0+1}^{n-r} \sigma_{k}\wedge \omega_k.
\end{equation} 
Therefore for any non-vanishing smooth function $\mu$   we have
\begin{equation}
       d(\mu\omega_{i_0})= d\mu\wedge\omega_{i_0}+\mu d\omega_{i_0}=(d\mu+\mu\sigma)\wedge\omega_{i_0}+ \displaystyle\sum_{k=i_0+1}^{n-r} \mu \sigma_{k}\wedge \omega_k.
\end{equation}
It follows that 
 the function $\mu$ satisfies $d(\mu\omega_{i_0})\in \mathcal{I}(\Lambda_{i_0})$ if and only if $d\mu+\mu\sigma\in\mathcal{S}(\Lambda_{i_0-1})=\mathcal{S}(\mathcal{X}_{i_0 -1})^{\circ},$ i.e., if and only if $$Z_i\,\lrcorner\, (d\mu+\mu\sigma) =X_j\,\lrcorner\, (d\mu+\mu\sigma)=0, \quad \mbox{for}\quad 1\leq i\leq r,\,1\leq j\leq i_0-1.$$
These conditions can be written in the form
\begin{equation}\label{intfactor}
     \dfrac{Z_i(\mu)}{\mu}+    Z_i\,\lrcorner\,\sigma = 0,\quad
      \dfrac{X_j(\mu)}{\mu}+    X_j\,\lrcorner\,\sigma = 0 \quad \mbox{for}\quad 1\leq i\leq r,\,1\leq j\leq i_0-1.
\end{equation}

In order to link the solutions of systems \eqref{symfactor} and \eqref{intfactor} we need to calculate $Z_i\,\lrcorner\,\sigma$ and $X_j\,\lrcorner\,\sigma.$

First observe that since $\omega_{i_0}\in \mathcal{S}(\Lambda_{i_0-1})=\mathcal{S}(\mathcal{X}_{i_0 -1})^{\circ}$ and $A_i,B_j\in \mathcal{S}(\mathcal{X}_{i_0 -1})$  equations \eqref{cuenta0} imply that
\begin{equation} \label{cuenta1}
[X_{i_0},Z_i]\,\lrcorner\,\omega_{i_0}=\lambda_i 
, \qquad   [X_{i_0},X_j]\,\lrcorner\,\omega_{i_0}=\gamma_j 
.
\end{equation}
Since $X_{i_0}\,\lrcorner\,\omega_{i_0}=1$ and,  for  $1\leq i\leq r,$  we have that $Z_i\,\lrcorner\,\omega_{i_0}=0,$   by \eqref{cuenta1} we can write
\begin{equation}\label{sobreZi}
    d\omega_{i_0}(Z_i,X_{i_0})=Z_i(X_{i_0}\,\lrcorner\,\omega_{i_0})-X_{i_0}(Z_i\,\lrcorner\,\omega_{i_0})-[Z_i,X_{i_0}]\,\lrcorner\,\omega_{i_0}=\lambda_i.
\end{equation} 
Similarly, for   $1\leq j\leq i_0-1,$
\begin{equation}\label{sobreXj}
   d\omega_{i_0}(X_j,X_{i_0})=X_j(X_{i_0}\,\lrcorner\,\omega_{i_0})-X_{i_0}(X_j\,\lrcorner\,\omega_{i_0})-[X_j,X_{i_0}]\,\lrcorner\,\omega_{i_0}=\gamma_j.
\end{equation}
On the other hand, since $Z_i,X_j,X_{i_0}\in \mathcal{S}(\mathcal{X}_{i_0})=\mathcal{S}(\Lambda_{i_0})^{\perp},$ it follows from \eqref{dw} that
\begin{equation}\label{cuenta3}
  \begin{array}{lll}
  d\omega_{i_0}(Z_i,X_{i_0})&=&\sigma\wedge \omega_{i_0} (Z_i,X_{i_0})=Z_i\,\lrcorner\,\sigma,\\
    d\omega_{i_0}(X_j,X_{i_0})&=&\sigma\wedge \omega_{i_0} (X_j,X_{i_0})=X_j\,\lrcorner\,\sigma.
  \end{array}
\end{equation} 
 Equations \eqref{sobreZi}, \eqref{sobreXj} and \eqref{cuenta3} show that 
\begin{equation}\label{fin1}
    Z_i\,\lrcorner\,\sigma= \lambda_i, \quad 
      X_j\,\lrcorner\,\sigma=  \gamma_j,  \quad \mbox{for}\quad 1\leq i\leq r,\,1\leq j\leq i_0-1.
\end{equation}
In consequence, the conditions given in \eqref{intfactor} can be written in the form
\begin{equation}\label{intfactor2}
    \begin{array}{ll}
        \dfrac{Z_i(\mu)}{\mu}+\lambda_i = 0,\quad
      \dfrac{X_j(\mu)}{\mu}+  \gamma_j= 0 \quad \mbox{for}\quad 1\leq i\leq r,\,1\leq j\leq i_0-1.
      \end{array}
\end{equation}
It is clear that $f$ satisfies \eqref{symfactor} if and only if $\mu=1/f$ satisfies system \eqref{intfactor2}, which concludes the proof.
\end{proof}

\begin{remark}\label{notasT43}
\begin{enumerate}
    \item  When $r=n-1$ Lemma \ref{relacioninversa} corresponds to Lemma \ref{lema_inverserelation}: in this case $i_0=1$ and $d(\mu \omega_1)=0,$ i.e., $\mu=1/f$ is an integrating factor of $\omega_{1}$ if and only if $f$ is a symmetrizing factor of $X_1$ respect to $\mathcal{Z}$.
    \item  If $\langle X_1,\ldots, X_{n-r}\rangle$ and $\langle \omega_1,\ldots, \omega_{n-r}\rangle$ is a pair of dually $\mathcal{Z}$-related \cinf-structures, then the requirement $X_{i_0}\,\lrcorner\, \omega_{i_0}=1$ stated in Lemma \ref{relacioninversa} is satisfied for any  ${i_0}\in\{1,\ldots,n-r\}.$    
\end{enumerate}
\end{remark}

Next we investigate the relationship between $\mathcal{S}(\Lambda_{i_0})$-intregrating factors and symmetrizing factors w.r.t $\mathcal{S}(\mathcal{X}_{i_0-1})$ without the  requirement $X_{i_0}\,\lrcorner\,\omega_{i_0}=1$ assumed in Lemma \ref{relacioninversa}.
According to \cite[Remark 3.4 (a)]{pancinf-structures} 
the function $h_{i_0}:=1/(X_{i_0}\,\lrcorner\,\omega_{i_0})$ is well defined and $\langle X_1,\ldots,h_{i_0}X_{i_0},\ldots,X_{n-r}\rangle$ and $\langle \omega_1,\ldots,\omega_{n-r}\rangle$ are $\mathcal{Z}$-related. Since these  \cinf-structures are  in the conditions of Lemma \ref{relacioninversa}, a function $\widetilde{f}$ is a symmetrizing factor of $h_{i_0}X_{i_0}$ w.r.t. $\mathcal{S}(\mathcal{X}_{i_0-1})$ if and only if the function $\mu=1/\widetilde{f}$ is a $\mathcal{S}(\Lambda_{i_0})$-integrating factor for $\omega_{i_0}$. On the other hand, it is clear that a function $f$ is a symmetrizing factor for $X_{i_0}$ w.r.t. $\mathcal{S}(\mathcal{X}_{i_0-1})$ if and only if $\widetilde{f}:=f/h_{i_0}$ is a symmetrizing factor for $h_{i_0} X_{i_0}$ w.r.t. $\mathcal{S}(\mathcal{X}_{i_0-1})$. Therefore $f$ is a symmetrizing factor for $X_{i_0}$ w.r.t. $\mathcal{S}(\mathcal{X}_{i_0-1})$ if and only if 
$$
\mu=1/\widetilde{f}=h_{i_0}/f=1/(fX_{i_0}\,\lrcorner\,\omega_{i_0})
$$
is a $\mathcal{S}(\Lambda_{i_0})$-integrating factor of $\omega_{i_0}$.

This discussion generalizes the result of Theorem \ref{factorgeneral}, established for 1-corank distributions, for \cinf-structures of distributions of arbitrary corank: 
 
\begin{theorem}\label{relacioninversa2}
Let $\mathcal{Z}=\mathcal{S}(\{Z_1,\ldots,Z_r\})$ be an involutive distribution on an open set $U\subset\mathbb{R}^n,$ $r<n.$ Assume that $\langle X_1,\ldots, X_{n-r}\rangle$ and $\langle \omega_1,\ldots, \omega_{n-r}\rangle$ are $\mathcal{Z}$-related \cinf-structures. 
For any ${i_0}\in\{1,\ldots,n-r\},$ a non-vanishing smooth function $f_{i_0}$ is a symmetrizing factor for $X_{i_0}$ w.r.t. $\mathcal{S}(\mathcal{X}_{i_0 -1})$ if and only if the function $\mu_{i_0}=1/(f_{i_0}X_{i_0}\,\lrcorner\, \omega_{i_0})$ is a $\mathcal{S}(\Lambda_{i_0})$-integrating factor for $\omega_{i_0}$.
\end{theorem}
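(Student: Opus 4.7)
The plan is to reduce Theorem \ref{relacioninversa2} to the already established Lemma \ref{relacioninversa} by a rescaling trick, since Lemma \ref{relacioninversa} handles precisely the normalized case $X_{i_0}\,\lrcorner\,\omega_{i_0}=1$. The first step is to exploit the fact that, by Remark 3.4(a) of \cite{pancinf-structures}, the function $h_{i_0}:=1/(X_{i_0}\,\lrcorner\,\omega_{i_0})$ is well defined and non-vanishing on $U$. I would then observe that replacing $X_{i_0}$ with $h_{i_0}X_{i_0}$ produces a new collection $\langle X_1,\ldots,h_{i_0}X_{i_0},\ldots,X_{n-r}\rangle$ which is still a \cinf-structure for $\mathcal{Z}$ (this follows from the remark after Definition \ref{Qsolvable}, which itself relies on Proposition \ref{prop_multiplicacion}), and which is still $\mathcal{Z}$-related to the unchanged $\langle \omega_1,\ldots,\omega_{n-r}\rangle$, since rescaling a single generator does not alter the submodules $\mathcal{S}(\mathcal{X}_k)$ for $0\le k\le n-r-1$. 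By construction, $(h_{i_0}X_{i_0})\,\lrcorner\,\omega_{i_0}=1$, so the rescaled pair falls within the hypotheses of Lemma \ref{relacioninversa}.

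Next I would prove the following elementary scaling identity for symmetrizing factors: a non-vanishing $f_{i_0}$ is a symmetrizing factor for $X_{i_0}$ w.r.t. $\mathcal{S}(\mathcal{X}_{i_0-1})$ if and only if $\widetilde{f}_{i_0}:=f_{i_0}/h_{i_0}$ is a symmetrizing factor for $h_{i_0}X_{i_0}$ w.r.t. $\mathcal{S}(\mathcal{X}_{i_0-1})$. This is immediate from the definition of symmetrizing factor in Corollary \ref{coro_symfactor}, since both conditions are equivalent to the single assertion that the vector field
\[
f_{i_0}X_{i_0} \;=\; \widetilde{f}_{i_0}\cdot(h_{i_0}X_{i_0})
\]
is a genuine symmetry of $\mathcal{S}(\mathcal{X}_{i_0-1})$.

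Finally, applying Lemma \ref{relacioninversa} to the rescaled $\mathcal{Z}$-related pair at the index $i_0$, I obtain that $\widetilde{f}_{i_0}$ is a symmetrizing factor for $h_{i_0}X_{i_0}$ w.r.t. $\mathcal{S}(\mathcal{X}_{i_0-1})$ if and only if
\[
\mu_{i_0} \;=\; \frac{1}{\widetilde{f}_{i_0}} \;=\; \frac{h_{i_0}}{f_{i_0}} \;=\; \frac{1}{f_{i_0}X_{i_0}\,\lrcorner\,\omega_{i_0}}
\]
is a $\mathcal{S}(\Lambda_{i_0})$-integrating factor of $\omega_{i_0}$. Composing the two equivalences yields the statement.

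I do not foresee any serious obstacle: the whole argument is bookkeeping around the rescaling $X_{i_0}\mapsto h_{i_0}X_{i_0}$, with Lemma \ref{relacioninversa} doing the genuine work. The only mildly delicate point is checking that this rescaling simultaneously preserves the \cinf-structure property and the $\mathcal{Z}$-relatedness with the unchanged 1-forms, which is exactly why Proposition \ref{prop_multiplicacion} and the invariance of $\mathcal{S}(\mathcal{X}_k)$ under scalar multiplication of a single generator must be invoked before Lemma \ref{relacioninversa} can be applied.
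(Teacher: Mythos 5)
Your proposal is correct and follows essentially the same route as the paper: the paper's own argument (given as the discussion immediately preceding the theorem) likewise normalizes $X_{i_0}$ by $h_{i_0}=1/(X_{i_0}\,\lrcorner\,\omega_{i_0})$, invokes Lemma \ref{relacioninversa} for the rescaled $\mathcal{Z}$-related pair, and transfers the conclusion back via the identity $f_{i_0}X_{i_0}=\widetilde{f}_{i_0}(h_{i_0}X_{i_0})$. Your explicit verification that the rescaling preserves both the \cinf-structure property and the $\mathcal{Z}$-relatedness is a point the paper asserts more briefly, but it is the same argument.
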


\begin{remark}\label{rem_ventajaSim}
In the search of integral manifolds of an involutive distribution by using a \cinf-structure, a very convenient case appears when some of the $X_i$ in the $\mathcal{C}^{\infty}$-structure is in fact a symmetry of the distribution $\mathcal{S}(\mathcal{X}_{i-1}).$ In this case, according to Theorem \ref{relacioninversa2}, the function $1/X_i\contract \omega_i$ is a $\mathcal{S}(\Lambda_{i})$-integrating factor. The corresponding Pfaffian equation obtained by the restriction of $\omega_i$ can be solved by a quadrature (see Remark \ref{relativeIF}). In particular, the integrability by quadratures in terms of solvable structures  \cite{basarab,hartl1994solvable} follows as a corollary of Theorem \ref{relacioninversa2}.
\end{remark}

\section{Examples}\label{sec5}

In this section we present two examples to show the relationships between relative integrating factors and relative symmetrizing factors for \cinf-structures that have been theoretically obtained in the previous section. 

\begin{example}\label{ex51}

In this example we consider again the distribution that has been integrated in  Example \ref{ex_distribucion} by using the procedure based on \cinf-structures described in Section \ref{algorithm}. Once  the Pfaffian equations $\omega_2\equiv 0$ and $\widetilde{\omega}_1\equiv 0,$ respectively defined by \eqref{omega2_ex} and \eqref{omega1_ex1}, have been integrated, we can obtain, as a consequence, the relative integrating factors from the corresponding first integrals \eqref{I2sol} and \eqref{I1_ex1}. Then  we will use Theorem \ref{relacioninversa2} to construct the relative symmetrizing factors. Finally, we will show how these relative symmetrizing factors could be used to construct a solvable structure for the distribution \eqref{Z_ex1}.

In  Example \ref{ex_distribucion} we have obtained, for the distribution $\mathcal{Z}=\mathcal{S}(\{Z_1,Z_2\})$ spanned by the vector fields given in \eqref{Z_ex1}, a \cinf-structure $\langle X_1,X_2\rangle$ formed by the vector fields
\begin{equation}\label{X_ex1}
    X_1=x_4\partial_{x_2}+\partial_{x_3}, \quad X_2=\partial_{x_4}.
\end{equation} 
\begin{enumerate}
    \item {\bf $\mathcal{S}(\Lambda_2)$-integrating factor and $\mathcal{S}(\mathcal{X}_1)$-symmetrizing factor:}
    
    In the second step of the integration procedure associated to the \cinf-structure, we have calculated the  function \eqref{I2sol}:
\begin{equation}\label{I2_ex2}
I_2=   x_1+\dfrac{x_4}{x_2-x_3 x_4},\end{equation} which is a first integral for the involutive distribution $\mathcal{X}_1=\mathcal{S}(\{Z_1,Z_2,X_1\}).$ In particular, $I_2$ is a true first integral for $\mathcal{Z}.$ Recall that such first integral has been obtained by integrating the Pfaffian equation $\omega_2\equiv 0,$ and therefore, there exists a function $\mu_2=\mu_2(x_1,x_2,x_3,x_4)$ such that 
$$\mu_2\omega_2=dI_2.$$ It can be checked that this function $\mu_2,$ which is in fact an integrating factor for the Frobenius integrable 1-form $\omega_2,$ becomes
\begin{equation}
    \mu_2=-(x_2-x_3x_4)^{-2}.
\end{equation}
Therefore, according to Theorem \ref{relacioninversa2}, the function 
\begin{equation}\label{f2_ex1}f_2=\dfrac{1}{\mu_2 X_{2}\lrcorner\,\omega_2}=\dfrac{1}{\mu_2 \Delta}=\dfrac{(x_2-x_3x_4)^2}{x_2}\end{equation}
is a symmetrizing factor for $X_2$ w.r.t. $\mathcal{S}(\mathcal{X}_1)=\mathcal{S}(\{Z_1,Z_2,X_1\}).$ In other words, the vector field 
\begin{equation}\label{Y2_ex1}
    Y_2:=f_2X_2=\dfrac{(x_2-x_3x_4)^2}{x_2}\partial_{x_4}
\end{equation} is a symmetry for the involutive distribution  $\mathcal{S}(\mathcal{X}_1)=\mathcal{S}(\{Z_1,Z_2,X_1\}).$ 

Observe, however, that $Y_2$ {\bf is not a symmetry} for $\mathcal{Z}=\mathcal{S}(\{Z_1,Z_2\}),$ because 
$$[Y_2,Z_1]=
-\left(\dfrac{x_3(x_2-x_3x_4)}{x_2}\right)^2\left( Z_1+x_3X_1\right)\notin \mathcal{Z}.$$
 \item {\bf $\mathcal{S}(\Lambda_1)$-integrating factor and $\mathcal{S}(\mathcal{X}_0)$-symmetrizing factor: }

In the fourth step of the integration procedure performed in Example \ref{ex_distribucion}, we have obtained the function \eqref{I1_ex1} 
$$I_1=\dfrac{1+x_3(C_2-x_1)}{x_2x_3},$$ by integrating the 1-form $\widetilde{\omega}_1$ given in \eqref{omega1_ex1}, which is Frobenius integrable on $N_1\subseteq \mathbb{R}^3.$ As before, a corresponding integrating factor $\widetilde{\mu}_1=\widetilde{\mu}_1(x_1,x_2,x_3;C_2)$ for $\widetilde{\omega}_1$ arises from the condition $dI_1=\widetilde{\mu}_1\widetilde{\omega}_1,$ giving rise to the function 
\begin{equation}\label{mu1_ex1_tilde}
\widetilde{\mu}_1(x_1,x_2,x_3;C_2)=-\dfrac{1+(C_2-x_1)x_3}{x_2^2 x_3^2}.
\end{equation}
In order to obtain a $\mathcal{S}(\Lambda_1)$-integrating factor for the 1-form $\omega_1$ given in \eqref{omega1_ex}, we consider the function  $\mu_1={\mu}_1(x_1,x_2,x_3,x_4)$ obtained from \eqref{mu1_ex1_tilde} by replacing  $C_2$ by the right-hand side  of \eqref{I2_ex2}, i.e., 
\begin{equation}\label{mu1_ex1}
    \mu_1=\widetilde{\mu}_1(x_1,x_2,x_3;I_2(x_1,x_2,x_3,x_4))=-\dfrac{1}{x_2x_3^2(x_2-x_3 x_4)}.
\end{equation}


It can be checked that the 1-form
$$\mu_1\omega_1=-\dfrac{1}{x_2}dx_1-\dfrac{1}{x_2x_3(x_2-x_3x_4)}dx_2-\dfrac{1}{x_2x_3^2}dx_3$$ satisfies
$$d(\mu_1\omega_1)=\left(-\dfrac{1}{x_2^2(x_2-x_3x_4)^2}dx_2\right)\wedge\omega_2\in \mathcal{I}(\Lambda_1),$$  which shows that, although \eqref{mu1_ex1} {\bf is not an integrating factor} for $\omega_1,$ the 1-form $\mu_1\omega_1$ is closed module $\Lambda_1,$ i.e., the function $\mu_1$ in \eqref{mu1_ex1} is an integrating  factor for $\omega_1$ w.r.t. $\mathcal{S}(\Lambda_1)=\mathcal{S}(\{\omega_2\}).$

Now we use the relative integrating factor \eqref{mu1_ex1} to construct, by using  Theorem \ref{relacioninversa2}, a  symmetrizing factor for $X_1$ w.r.t. $\mathcal{S}(\mathcal{X}_0)=\mathcal{S}(\{Z_1,Z_2\})=\mathcal{Z}.$  In this case the  function 
\begin{equation}\label{f1_ex1}f_1=\dfrac{1}{\mu_1 X_{1}\lrcorner\,\omega_1}=-\dfrac{1}{\mu_1 \Delta}=-{x_3^2(x_2-x_3 x_4)}\end{equation}
is a symmetrizing factor for $X_1$ w.r.t. $\mathcal{Z}.$ In other words, the vector field 
\begin{equation}\label{Y1_ex1}
    Y_1:=f_1X_1=-x_4x_3^2(x_2-x_3 x_4)\partial_{x_2}-x_3^2(x_2-x_3 x_4)\partial_{x_3}
\end{equation} is a  symmetry for $\mathcal{Z}=\mathcal{S}(\{Z_1,Z_2\}).$  In fact, it can be checked that 
$$[Y_1,Z_1]=0,\quad [Y_1,Z_2]=f_1Z_1.$$
\item {\bf A solvable structure for $\mathcal{Z}:$} 

As a consequence of the previous calculations, we observe that the vector fields $Y_1$ and $Y_2,$ given in \eqref{Y1_ex1} and  \eqref{Y2_ex1}, respectively  
\begin{equation}
    \label{solvable_ex1}
        Y_1=-x_4x_3^2(x_2-x_3 x_4)\partial_{x_2}-x_3^2(x_2-x_3 x_4)\partial_{x_3},\quad  Y_2=\dfrac{(x_2-x_3x_4)^2}{x_2}\partial_{x_4}
\end{equation} provide a solvable structure $\langle Y_1,Y_2\rangle$ for $\mathcal{Z},$ because the following commutations relationships hold: 
$$\left[Y_1,Z_1\right]=0,\quad  \left[Y_1,Z_2\right]=-x_3^2(x_2-x_3x_4)Z_1$$
and 
$$\begin{array}{l}
\left[Y_2,Z_1\right]=-\dfrac{x_3(x_2-x_3 x_4)^2}{x_2^2} Z_1-\dfrac{(x_2-x_3 x_4)}{x_2^2 }  Y_1,\\
    \left[Y_2,Z_2\right]=0,\\
    \left[Y_2,Y_1\right]=-\dfrac{x_3^2(x_2-x_3x_4)^3}{x_2^2} Z_1-\dfrac{x_3^2x_4(x_2-x_3x_4)}{x_2^2} Y_1.
\end{array}
$$

It should be emphasized that the solvable structure $\langle Y_1,Y_2\rangle$ is not necessary to integrate the distribution $\mathcal{Z},$  because this has already been  done in Example \ref{ex_distribucion} by using the \cinf-structure integration method. Such solvable structure  has been constructed to explain better the following considerations  that emerge from a comparative analysis of the integration procedures based on \cinf-structures and solvable structures:

\begin{itemize}
    \item The vector fields \eqref{solvable_ex1} that form the solvable structure have been determined from the vector fields  \eqref{X_ex1}  that define the \cinf-structure and the corresponding relative symmetrizing factors \eqref{f2_ex1} and \eqref{f1_ex1}. These elements are available only when the integration procedure associated to the \cinf-structure, performed in Example \ref{ex_distribucion}, has been completed.  
    \item The Pfaffian equations which appear in the integration procedure associated to a solvable structure are integrable by quadratures. However, a solvable structure is more difficult to calculate than a \cinf-structure, because their elements must satisfy stronger conditions  than in the case of  \cinf-structures. In this example it is clear  that the infinitesimals of the vector fields   \eqref{X_ex1} that form a \cinf-structure are  
    simpler  than the infinitesimals of the vector fields \eqref{solvable_ex1} that define a solvable structure.
\end{itemize}
\end{enumerate}



\end{example}

In the next example we calculate and use a \cinf-structure to solve a third-order ODE which, to the best of our knowledge, cannot be integrated by classical methods. As in the previous example, relative symmetrizing factors will also be used to construct a solvable structure. The infinitesimals of the elements of such solvable structure depend on special functions (of Airy type), which makes very difficult their determination without the previous knowledge of the \cinf-structure. As before, the solvable structure is not necessary to solve the ODE, which is completely integrated by using exclusively the \cinf-structure method.

\begin{example}\label{ex_airy}
Let us consider the third-order ordinary differential equation
\begin{equation}\label{eq_airy}
    u_3 = \displaystyle\frac{u_1(xu_1-x-1)-u_2 (u_1+x)-x^2}{u_1},
\end{equation}
and denote by $$A=\partial_x+u_1\partial_u+u_2\partial_{u_1}+\left(\dfrac{u_1(xu_1-x-1)-u_2 (u_1+x)-x^2}{u_1}\right)\partial_{u_2}$$ its associated vector field, defined on the open set $U=\{(x,u,u_1,u_2)\in J^2(\mathbb{R},\mathbb{R}): u_1\neq 0\}.$  It can be checked that equation (\ref{eq_airy}) only admits  the Lie point symmetry $\partial_u$. Moreover, the associated reduced equation of second-order, does not admit Lie point symmetries. 

We aim    to integrate the trivially involutive distribution $\mathcal{Z}=\mathcal{S}(\{A\})$ by using a \cinf-structure.  Clearly the second-order prolongation \cite{olver86} of the Lie point symmetry $\partial_u$ becomes $X_1=(\partial_u)^{(2)}=\partial_u$ and $X_1$ is a symmetry of $\mathcal{S}(\{A\}).$ Therefore, $X_1=\partial_u$ can  be used as the first element of a \cinf-structure for $\mathcal{S}(\{A\}).$ For the involutive distribution $\mathcal{X}_1=\mathcal{S}(\{A,X_1\}),$ it is not difficult to find the \cinf-symmetry $X_2
$ given  by the vector field  $$ X_2 = \partial_{u_1} +\dfrac{u_2+x}{u_1}\partial_{u_2},$$ which satisfies the following commutation relations: 
\begin{equation}
[X_1,A] =0, \quad
[X_2,A] =X_1+\dfrac{u_2+x}{u_1} X_2, \quad
[X_2,X_1] = 0.
\end{equation}
As the third element of a \cinf-structure we may choose any vector field $X_3$ such that the set $\{A,X_1,X_2,X_3\}$ is linearly independent. By choosing $X_3= \partial {u_2}$ the corresponding determinant \eqref{delta} becomes $$\Delta=X_3\,\lrcorner\,X_2\,\lrcorner\,X_1\lrcorner\,A\,\lrcorner\,\boldsymbol{\Omega}=1.$$ In consequence, the ordered set $\langle X_1,X_2,X_3 \rangle$ defined by the vector fields 
\begin{equation}\label{vectors}
     X_1 = \partial_u,\quad
     X_2 = \partial_{u_1} +\dfrac{u_2+x}{u_1}\partial_{u_2},  \quad
     X_3 = \partial {u_2},
\end{equation} constitute a \cinf-structure  for $\mathcal{S}(\{A\})$.

Once a \cinf-structure has been found, the equation \eqref{eq_airy} can be integrated by following the procedure described at the beginning of Section \ref{algorithm}. In what follows, we just present the results obtained in each step, by means of calculations similar to those that have been described in detail for Example \ref{ex_distribucion}: 
\begin{enumerate}
    \item From the vector fields \eqref{vectors}, a  $\mathcal{Z}$-related \cinf-structure of 1-forms for $\mathcal{S}(\{A\})$ can be constructed as in \eqref{contractedforms}:
\begin{equation}\label{w_ex3}
    \begin{array}{l}
\omega_1 = -u_1 dx + du,\\ \\
\omega_2 = u_2 dx - du_1,\\ \\
\omega_3 = \left(\dfrac{(u_2+x)^2}{u_1}+u_2+x+1-xu_1 \right) dx-\dfrac{u_2+x}{u_1} du_1+d u_2.
\end{array}
\end{equation}
\item The integration of the  completely integrable Pfaffian equation $\omega_3\equiv 0$ yields a corresponding first integral $I_3=I_3(x,u,u_1,u_2)$ that can be expressed in terms of two linearly independent solutions $\phi_1=\phi_1(x)$ and $\phi_2=\phi_2(x)$  of the second-order linear equation
\begin{equation}\label{lineal}
  \phi''(x)-\left(x+\dfrac{1}{4}\right)\phi(x)=0.
\end{equation} Since equation \eqref{lineal} can be transformed, by means of  the translation of the independent variable $x=t-1/4,$ into the Airy equation $\phi''(t)-t\phi(t)=0,$  then  a fundamental set $\{\phi_1,\phi_2\}$  of solutions   to equation \eqref{lineal} can be expressed in terms of the  Airy functions as follows: \begin{equation}\label{airy}
    \phi_1(x)=\mbox{Ai}\left(x+\dfrac{1}{4}\right),\qquad \phi_2(x)=\mbox{Bi}\left(x+\dfrac{1}{4}\right).
\end{equation} It can be checked that the integration of $\omega_3\equiv 0$ leads to the first integral  
\begin{equation}\label{I3_airy}
    I_3 = -\displaystyle\frac{2u_1\phi_2'(x) - (2u_2+u_1+2x)\phi_2(x)}{2u_1\phi_1'(x) - (2u_2+u_1+2x)\phi_1(x)}.
\end{equation} This function is a first integral for the distribution $\mathcal{S}(\{A,X_1,X_2\}).$ As usual, let $\Sigma_{(C_3)}$ denote the corresponding integral manifold implicitly defined by $I_3=C_3,$ $C_3\in \mathbb{R}.$
\item The pullbacks of $\omega_1$ and $\omega_2$ in \eqref{w_ex3},  by the following local parametrization of $\Sigma_{(C_3)}:$ 
$$\iota_3(x,u,u_1)=\left(x,u,u_1, \dfrac{C_3\phi'_1(x)+\phi'_2(x)}{C_3\phi_1(x)+\phi_2(x)}\,u_1-\dfrac{u_1}{2}-x\right),$$
are respectively given by:  
\begin{equation}\label{wr_ex3}
 \begin{array}{l}
\widetilde{\omega}_1:=(\iota_3)^*(\omega_1) = -u_1 dx + du,\\ \\
\widetilde{\omega}_2:=(\iota_3)^*(\omega_2) = -\left(\dfrac{u_1}{2}+x-\dfrac{C_3\phi'_1(x)+\phi'_2(x)}{C_3\phi_1(x)+\phi_2(x)}\,u_1\right)dx - du_1.
\end{array}   
\end{equation}
\item The Pfaffian equation $\widetilde{\omega}_2\equiv 0$ is completely integrable and it can be checked that a  corresponding first integral $I_2=I_2(x,u,u_1;C_3)$ becomes
\begin{equation}\label{I2}
    I_2=G(x;C_3)+\dfrac{u_1 e^{x/2}}{C_3\phi_1(x)+\phi_2(x)},
\end{equation} where  $G=G(x;C_3)$ is a smooth function satisfying 
\begin{equation}\label{G}
G'(x;C_3)=\displaystyle\frac{xe^{x/2}}{C_3 \phi_1(x) +\phi_2(x)}.
\end{equation}
\item A local parametrization for the integral manifold $\Sigma_{(C_2,C_3)}$ of  $\widetilde{\omega}_2\equiv 0,$ implicitly defined by $I_2(x,u,u_1;C_3)=C_2,C_2\in \mathbb{R},$ becomes
$$\iota_2(x,u)=\left(x,u,e^{-x/2}\,\left( C_3 \phi_1(x) +\phi_2(x) \right)\,\left( C_2-G(x;C_3) \right)\right).$$ The pullback of the 1-form $\widetilde{\omega}_1$ defined in \eqref{wr_ex3}  by $\iota_2$ becomes
\begin{equation}\label{wr2_ex3}
   \widehat{\omega}_1:=(\iota_2)^*(\widetilde{\omega}_1)=-e^{-x/2} \left( C_3 \phi_1(x) +\phi_2(x) \right)\left( C_2-G(x;C_3) \right)dx+du.
\end{equation}
\item Since, for this example,  the first element $X_1$ of the \cinf-structure is not only a \cinf-symmetry of $\mathcal{Z}=\mathcal{S}(\{A\})$ but a symmetry,  then, according to Remark \ref{rem_ventajaSim}, the function 
$$\dfrac{1}{X_1\,\lrcorner\,\omega_1}=\dfrac{1}{\Delta}=1$$ is an integrating factor of the 1-form given in \eqref{wr2_ex3}.  In other words, the  Pfaffian equation $\widehat{\omega}_1\equiv 0$ is not only completely integrable, but integrable by a quadrature. A corresponding first integral $I_1=I_1(x,u;C_2,C_3)$ can be expressed in the form
\begin{equation}\label{I1_ex3}
    I_1(x,u;C_2,C_3)=H(x;C_2,C_3)+u,
\end{equation}
where the function $H=H(x;C_2,C_3)$ satisfies 
$$H'(x;C_2,C_3)=-e^{-x/2}\left( C_2-G(x;C_3) \right) \left( C_3 \phi_1(x) +\phi_2(x) \right).$$
\item In this example, we do not need to calculate the integral manifolds of the distribution $\mathcal{S}(\{A\}),$ which corresponds to the second-order prolongations of the solutions of the ODE \eqref{eq_airy}. The general solution of the equation \eqref{eq_airy} can be directly expressed, in implicit form, in terms of the function given in  \eqref{I1_ex3}, as  $H(x;C_2,C_3)+u=C_1,$ where $C_1\in \mathbb{R}.$ This provides the following explicit expression for the general solution of the equation \eqref{eq_airy}, written in terms of the Airy functions given in \eqref{airy} and the function $G(x;C_3)$ that satisfies \eqref{G}:
\begin{equation}\label{solucion_ex3}
    u(x)=\int e^{-x/2}\left( C_2-G(x;C_3) \right) \left( C_3 \phi_1(x) +\phi_2(x) \right)dx+C_1.
\end{equation}
\end{enumerate} 
Let us observe that, as in Example \ref{ex51}, once the integration procedure has been completed, we can now calculate the relative integrating factors and the relative symmetrizing factors for the \cinf-structure defined by the vector fields given in \eqref{vectors}. We emphasize that what follows is not necessary for the integration of the equation, whose general solution has already been obtained in \eqref{solucion_ex3}:

\begin{enumerate}
    \item {\bf $\mathcal{S}(\Lambda_3)$-integrating factor and $\mathcal{S}(\mathcal{X}_2)$-symmetrizing factor: } A function $\mu_3=\mu_3(x,u_1,u_2)$ such that $dI_3=\mu_3\,\omega_3$ becomes
   $$\mu_3 =\dfrac{4u_1 \left( \phi_1'(x) \phi_2(x)-\phi_1(x) \phi_2'(x)  \right)}{\left( 2 u_1 \phi_1'(x) - (2u_2+u_1+2x) \phi_1(x) \right)^2}.$$ Since $X_3\,\lrcorner\,\omega_3=1,$ by Theorem \ref{relacioninversa2}, we have that $f_3=1/\mu_3$ is a $\mathcal{S}(\mathcal{X}_2)$-symmetrizing factor for $X_3,$ i.e., the vector field 
  \begin{equation}\label{f3X3}
    Y_3:=   f_3 X_3 = \displaystyle\frac{\left( 2 u_1 \phi_1'(x) - (2u_2+u_1+2x) \phi_1(x) \right)^2}{4u_1 \left( \phi_1'(x) \phi_2(x)-\phi_1(x) \phi_2'(x)  \right)} \partial_{u_2},
  \end{equation}  is a symmetry of the distribution $\mathcal{X}_2=\mathcal{S}(\{A,X_1,X_2\}).$ However, $Y_3$ {\bf is not a generalized symmetry} for the equation \eqref{eq_airy}.
    \item {\bf $\mathcal{S}(\Lambda_2)$-integrating factor and $\mathcal{S}(\mathcal{X}_1)$-symmetrizing factor: } A function $\widetilde{\mu}_2=\widetilde{\mu}_2(x,u_1;C_3)$ such that $dI_2=\widetilde{\mu}_2\,\widetilde{\omega}_2$ becomes
   $$\widetilde{\mu}_2(x,u_1;C_3) =-\dfrac{e^{x/2}}{C_3 \phi_1(x) +\phi_2(x)}.$$ In order to obtain a $\mathcal{S}(\Lambda_2)$-integrating factor  for $\omega_2,$ we replace $C_3$ by the right-hand side of \eqref{I3_airy}:
   $$\mu_2(x,u,u_1,u_2)= -\dfrac{ 2 u_1\phi_1'(x)- (2 u_2+u_1+2x )\phi_1(x)}{2 u_1 e^{-x/2}\left( \phi_1'(x)\phi_2(x)-\phi_1(x) \phi_2'(x)\right)}.$$ Since $X_2\,\lrcorner\,\omega_2=-1,$ by Theorem \ref{relacioninversa2}, we have that $f_2=-1/{\mu}_2$ is a $\mathcal{S}(\mathcal{X}_1)$-symmetrizing factor for $X_2,$ i.e., the vector field 
  \begin{equation}\label{f2X2}
     Y_2:=   f_2 X_2 = \dfrac{2 u_1 e^{-x/2}\left( \phi_1'(x)\phi_2(x)-\phi_1(x) \phi_2'(x)\right)}{ 2 u_1\phi_1'(x)- (2 u_2+u_1+2x )\phi_1(x)} \left(\partial_{u_1} +\dfrac{u_2+x}{u_1}\partial_{u_2}\right),  
  \end{equation}  is a symmetry of the distribution $\mathcal{X}_1=\mathcal{S}(\{A,X_1\}),$ but {\bf is not a generalized symmetry} for equation \eqref{eq_airy}.
    \item {\bf $\mathcal{S}(\Lambda_1)$-integrating factor and $\mathcal{S}(\mathcal{X}_0)$-symmetrizing factor: } Since, in this example, the first element $X_1$ of the \cinf-structure is already a symmetry of $\mathcal{S}(\{A\}),$  the $\mathcal{S}(\Lambda_1)$-integrating factor is  $\mu_1=1$ and  $f_1=1$ is the symmetrizing factor of $X_1.$ 
\end{enumerate}
As a consequence of the previous calculations, we have obtained the vector fields: 
\begin{equation}\label{solvable_ex3}
\begin{array}{l}
    Y_1:= f_1 X_1 = \partial_u,\\
     Y_2:=f_2 X_2 = \dfrac{2 u_1 e^{-x/2}\left( \phi_1'(x)\phi_2(x)-\phi_1(x) \phi_2'(x)\right)}{ 2 u_1\phi_1'(x)- (2 u_2+u_1+2x )\phi_1(x)} \left(\partial_{u_1} +\dfrac{u_2+x}{u_1}\partial_{u_2}\right),  \\
   Y_3:=  f_3 X_3 = \displaystyle\frac{\left( 2 u_1 \phi_1'(x) - (2u_2+u_1+2x) \phi_1(x) \right)^2}{4u_1 \left( \phi_1'(x) \phi_2(x)-\phi_1(x) \phi_2'(x)  \right)} \partial_{u_2},
\end{array}
\end{equation}
which constitute a solvable structure $\left\langle Y_1,Y_2,Y_3\right\rangle$ for $\mathcal{S}(\{A\}).$ This solvable structure has been calculated just for illustration purposes, because it appears only when the general solution \eqref{solucion_ex3} of the equation has already been obtained. It should be observed that the infinitesimals of the vector fields that define this solvable structure are not obvious: they depend on Airy functions and are therefore much more difficult to find that the vector fields \eqref{vectors} that define the \cinf-structure.   


\end{example}
\section{Final remarks}
	The integrability of systems of ODEs can be formulated as the geometrical problem of integrating the distribution generated by the vector field associated to  the system.
	More generally, the integrability of  a general involutive distribution $\mathcal{Z}=\mathcal{S}(\{Z_1,\ldots,Z_r\})$ is guaranteed by Frobenius theorem. However this theorem does not provide a method to construct the corresponding integral manifolds. One of the most fruitful methods of integration is provided by solvable structures \cite{basarab,sherring1992geometric,hartl1994solvable,Barco2001,Barco2002,barco2001similarity,BarcoPDE}.
	The first element $X_1$ in a solvable structure must be a symmetry of the distribution $\mathcal{Z}$. The second  element $X_2$  must be a symmetry of the distribution generated by $\{Z_1,\ldots,Z_r, X_1\}$, and so on. Here $X_2$ is not necessarily a symmetry of $\mathcal{Z}$. The interest on solvable structures is that they allow the integration by quadratures of the distribution. The main drawback of the method is that, in general, it is very difficult to find the corresponding vector fields in practice. This has motivated several studies that can be useful for that aim. For instance, for ODEs, prior knowledge of some symmetry generators can help to build a solvable structure, as  observed by  P. Morando and D. Catalano-Ferraioli in \cite{Catalano_Paola,Catalano_nonlocal}. However, many times the  equations under study lack Lie point symmetries, or the generalized symmetries necessary for the construction of the solvable structure are so intricate that they are very difficult to determine in practice.  
 

	In the recent paper  \cite{pancinf-structures}, the concept of solvable structure has  been generalized to  $\mathcal{C}^\infty$-structures,  by requiring  the vector fields $X_1, X_2, \ldots, X_{n-r}$  be $\mathcal{C}^\infty$-symmetries of the previous distributions. 
	These new structures are quite more easy to be found than solvable structures, because  weaker conditions are required for their elements. A procedure for the stepwise integration of $\mathcal{Z}$ via \cinf-structures is included in the paper. 
 Such procedure, as in the case of solvable structures, is based on  successive integrations of Pfaffian equations. For \cinf-structures, such Pfaffian equations are also completely integrable, although  not necessarily integrable by quadratures, as in the case of solvable structures. However, they can still be integrated, even without  prior knowledge of an integrating factor, as illustrated by the examples presented in this paper (see also the examples given in \cite{pancinf-structures}).  Furthermore, since 
  each Pfaffian equation is defined in a space with one less dimension than in the previous stage, its integration (with or without integrating factors) becomes more and more easy.

  A remarkable feature of the \cinf-structures procedure is that once the Pfaffian equation at any intermediate step, defined on a $j$th-dimensional space, has been integrated, then it is possible to construct a corresponding  integrating factor, which depends on $n-j$ constants of integration. As a consequence,  a relative integrating factor (depending on the $n$ variables) can be constructed by replacing  such constants by the corresponding  $n-j$ first integrals  calculated in the previous stages.   This procedure considerably simplifies the calculation of such relative integrating factors. 

  The well-known relationship between symmetries and integrating factors for first-order ODEs has been generalized in this  paper for \cinf-structures and involutive distributions of arbitrary corank in Theorem \ref{relacioninversa2}. This result establishes
a kind of reciprocal relationship between the relative symmetrizing factors of the elements of the \cinf-structure and the relative integrating factors.  

The examples presented in this paper clearly illustrate that the relative integrating factors, and hence the relative symmetrizing factors, can be very intricate. Therefore, if only 
solvable structures were used to integrate the distribution then  such factors would  extremely  complicate  the expressions of the infinitesimals of the vector fields (see, for instance, the expressions \eqref{solvable_ex3} in Example \ref{ex_airy}). Since this is not necessary, because the integration procedure can be carried out by using the \cinf-structure, this makes clear the  advantage of the \cinf-structure method.

\section*{Acknowledgments}
C. Muriel and A. Ruiz acknowledge the financial support from 
{\it   Junta de Andaluc\'ia} (Spain) by means of  the project ProyExcel$\_$00780. The authors also acknowledge the financial support from {\it   Junta de Andaluc\'ia} (Spain)  to the  research group FQM--377. 

\bibliographystyle{unsrt}  
\bibliography{references.bib}
\end{document}